\def\Cc{{\mathcal C}}
\def\Oc{{\mathcal O}}
\def\Pc{{\mathcal P}}
\def\bold{\bf}
\def\eb{{\bold e}}
\def\0b{{\bold 0}}
\def\Pc{{\mathcal P}}
\def\Hc{{\mathcal H}}
\def\Ac{{\mathcal A}}     
\def\Cc{{\mathcal C}}
\bmdefine{\Bzero}{0}
\bmdefine{\Bone}{1}
\def\Bone{{\bf 1}}
\def\RR{{\mathbb R}}
\def\ZZ{{\mathbb Z}}
\def\QQ{{\mathbb Q}}
\newtheorem{theorem}{Theorem}[section]
\newtheorem{proposition}[theorem]{Proposition}
\newtheorem{lemma}[theorem]{Lemma}
\newtheorem{definition}[theorem]{Definition}
\newtheorem{example}[theorem]{Example}
\newtheorem{rmk}[theorem]{Remark}
\newtheorem{corollary}[theorem]{Corollary}
\newtheorem{property}[theorem]{Property}
\newenvironment{remark}{\begin{rmk}\rm}{\end{rmk}}
\numberwithin{equation}{section}
\newcommand{\op}{\mathcal{O}(P)}
\renewcommand{\phi}{\varphi}
\renewcommand{\emptyset}{\varnothing}
\def\Ddots{\mathinner{\mkern1mu\raise\p@
\vbox{\kern7\p@\hbox{.}}\mkern2mu
\raise4\p@\hbox{.}\mkern2mu\raise7\p@\hbox{.}\mkern1mu}}
\newcommand{\conv}{\operatorname{conv}}
\begin{document}
\title{Cutting convex polytopes by hyperplanes}
\author{Takayuki Hibi and Nan Li}
\address{Takayuki Hibi,
Department of Pure and Applied Mathematics,
Graduate School of Information Science and Technology,
Osaka University,
Toyonaka, Osaka 560-0043, Japan}
\email{hibi@math.sci.osaka-u.ac.jp}
\address{Nan Li,
Department of Mathematics,
Massachusetts Institute of Technology,
Cambridge, MA 02139, USA}
\email{nan@math.mit.edu}
\thanks{
{\bf 2010 Mathematics Subject Classification:}
Primary 52B05; Secondary 06A07. \\
\, \, \, {\bf Keywords:}
separating hyperplane, order polytopes, chain polytopes, 
Birkhoff polytopes.
}
\begin{abstract}
Cutting a polytope is a very 
natural way to produce new classes of interesting polytopes. Moreover, it has been very enlightening to
explore which algebraic and combinatorial properties of the orignial polytope are 
hereditary to its subpolytopes obtained by a cut. In this work, we put our attention to all the 
seperating hyperplanes for some given polytope (integral and convex) and study the existence and classification
of such hyperplanes. 

We prove the exitence of seperating hyperplanes 
for the order and chain polytopes for any finite posets that are not a single chain; prove there are no
such hyperplanes for any Birkhoff polytopes. Moreover, we give a complete seperating hyperplane 
classification for the unit cube and its subpolytopes obtained by one cut, together with 
some partial classification results for order and chain polytopes.
\end{abstract}
\maketitle 
\section*{Introduction}
Let $\Pc \subset \RR^{n}$ be a convex polytope of dimension $d$ 
and $\partial \Pc$ its boundary.
If $\Hc \subset \RR^{n}$ is a hyperplane,
then we write $\Hc^{(+)}$ and $\Hc^{(-)}$ for the closed half-spaces 
of $\RR^{d}$ with $\Hc^{(+)} \cap \Hc^{(-)} = \Hc$.
We say that {\em $\Hc$ cuts $\Pc$} if 
$\Hc \cap (\Pc \setminus \partial \Pc) \neq \emptyset$ 
and if each vertex of the convex polytopes $\Pc \cap \Hc^{(+)}$
and $\Pc \cap \Hc^{(-)}$ is a vertex of $\Pc$.
When $\Hc \cap (\Pc \setminus \partial \Pc) \neq \emptyset$,
it follows that $\Hc$ cuts $\Pc$ if and only if, for each edge 
$e = \conv(\{v,v'\})$ of $\Pc$, where $v$ and $v'$ are vertices of $\Pc$,
one has $\Hc \cap e \subset \{ v, v' \}$. Cutting a polytope is a very 
natural way to produce new classes of interesting polytopes. 
For example, the hypersimplices are obtained from cutting the unit cube
by hyperplanes of the form $x_1+\cdots+x_n=k,k+1$, for some integer $0\le k<n$, which
is a class of very interesting and well-studied polytopes (see for example \cite{Stan2}, \cite{LP2} and \cite{L}). A similar
class of interesting polytopes obtained from cutting permutahedrons and in general any graphical
zonotopes are studied in \cite{LP}. In general, it is a very interesting problem to
explore which algebraic and combinatorial properties of $\Pc$
are hereditary to $\Pc \cap \Hc^{(+)}$ and $\Pc \cap \Hc^{(-)}$.
For example, in \cite{HLZ} the study on separating hyperplanes of 
the edge polytope $\Pc_{G}$ of a finite connected simple graph $G$
is achieved and it is shown that $\Pc_{G}$ is normal if and only if
each of $\Pc_{G} \cap \Hc^{(+)}$ and $\Pc_{G} \cap \Hc^{(-)}$
is normal.

In this paper, we look at the problem from another perspective, focusing more on the
hyerplane that cuts the polytope. We are interested in the exitence and
classification of such hyperplanes. Let us make it more
precise what we mean by a ``cut''.

If $\Hc$ cuts $\Pc$, then we call
$\Hc$ a {\em separating hyperplane} of $\Pc$.
If $\Hc$ is a separating hyperplane of $\Pc$, then
the {\em decomposition} of $\Pc$ via  
$\Hc$ is
\[
\Pc = (\Pc \cap \Hc^{(+)}) \cup (\Pc \cap \Hc^{(-)}).
\]
For example, if $[0,1]^3 \subset \RR^{3}$ is the 
unit cube,
then the hyperplane $\Hc \subset \RR^{3}$
define by the equation $x_{i} + x_{j} = 1$
with $1 \leq i < j \leq 3$ is a separating hyperplane
of $[0,1]^3$.

Unless $n = d$, two different separating hyperplanes 
$\Hc$ and $\Hc'$ of $\Pc$ might yield 
the same decomposition of $\Pc$. 
For example, if $[0,1]^2 \subset \RR^{3}$ is the square,
then its separating hyperplane defined by $x_{1} + x_{2} + x_{3} = 1$ and
that defined by $x_{1} + x_{2} - x_{3}= 1$ clearly yield
the same decomposition of $[0,1]^2$. 
 
An {\em integral} convex polytope is a convex polytope
any of whose vertices has the integer coordinates.
Let $\Pc \subset \RR^{n}$ be an integral convex polytope 
of dimension $d$ and 
suppose that $\partial \Pc \cap \ZZ^{n}$ is
the set of vertices of $\Pc$.
It then follows that
a hyperplane $\Hc \subset \RR^{n}$
is a separating hyperplane of $\Pc$ if and only if
each of the subpolytopes 
$\Pc \cap \Hc^{(+)}$ and $\Pc \cap \Hc^{(-)}$
is integral of dimension $d$.

The study of existence and classification for any general interal convex poltyopes can be very hard. In the present paper, 
we focus our study on the following classes of polytopes: 
the unit cube and its subpolytopes cut by one hyperplane, order and chain polytopes, and Birkhoff polytopes. We prove the exitence of seperating hyperplanes 
for the order and chain polytopes for any finite posets that are not a single chain (Theorem \ref{SepHypOC}), and prove there are no
 seperating hyperplanes for any Birkhoff polytopes (Theorem \ref{more}). Moreover, we give a complete seperating hyperplane 
classification for the unit cube and its subpolytopes cut by one hyperplane (Section 1), together with 
partial classification results for order and chain polytopes (Section 2).
%
%
\section{The unit cube}
Let $[0,1]^d \subset \RR^{d}$ be the unit cube with $d \geq 2$.
In the study of its separating hyperplane $\Hc$
it is assumed that $\Hc$ passes through the origin of $\RR^{d}$.
First of all, 
we discuss the question when a hyperplane $\Hc$ of $\RR^{d}$
passing through the origin 
\begin{eqnarray}
\label{hyperplane}
\Hc:\,a_1x_1+\cdots+a_dx_d=0,
\end{eqnarray}
where each $a_{i} \in \QQ$,
is a separating hyperplane of $[0,1]^d$.

\begin{lemma}
\label{hyone}
A hyperplane {\rm (\ref{hyperplane})}
is a separating hyperplane of $[0,1]^d$ 
if and only if
there exists $p$ and $q$ with $a_{p} > 0$ and $a_{q} < 0$
and 
all nonzero coefficients of $\Hc$ 
have the same absolute value.
\end{lemma}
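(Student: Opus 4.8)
The plan is to translate the separating condition into a statement about partial sums of the coefficients, using the edge criterion recalled before the statement. The vertices of $[0,1]^d$ are the points of $\{0,1\}^d$, and the edges join two vertices that differ in a single coordinate. Fix an index $i$ and write an edge in direction $i$ as $e=\conv(\{v,v+\Be_i\})$ with $v\in\{0,1\}^d$ and $v_i=0$; if $T=\{k:v_k=1\}\subseteq\{1,\dots,d\}\setminus\{i\}$ and $S=\sum_{k\in T}a_k$, then the linear form $f(x)=a_1x_1+\cdots+a_dx_d$ takes the values $f(v)=S$ and $f(v+\Be_i)=S+a_i$ at the two endpoints. Thus $\Hc$ meets the relative interior of $e$ in a single new point exactly when $a_i\neq 0$ and $S$, $S+a_i$ have strictly opposite signs, i.e.\ when $S$ lies strictly between $0$ and $-a_i$. (When $a_i=0$ the edge is either disjoint from $\Hc$ or entirely contained in it, and in neither case does a new vertex appear.) Hence I would record the working criterion: $\Hc$ is separating if and only if it meets the interior of the cube and, for every $i$ and every $T\subseteq\{1,\dots,d\}\setminus\{i\}$, the partial sum $\sum_{k\in T}a_k$ does not lie strictly between $0$ and $-a_i$.

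Next I would dispose of the interior condition. If all $a_i$ have one sign (and are not all zero), then $f$ is single-signed on the open cube $(0,1)^d$, with its only zeros on $\partial[0,1]^d$, so $\Hc$ cannot meet the interior; conversely, if $a_p>0$ and $a_q<0$, then evaluating $f$ at the two vertices $u,w$ defined by $u_k=1\iff a_k>0$ and $w_k=1\iff a_k<0$ gives $f(u)>0>f(w)$, so by continuity $f$ takes both signs on the connected open cube and its zero set meets the interior. Therefore the interior condition is equivalent to the existence of $p,q$ with $a_p>0$ and $a_q<0$.

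For the sufficiency of the coefficient condition, suppose both signs occur and every nonzero $a_i$ has absolute value $c$. Then every partial sum $\sum_{k\in T}a_k$ is an integer multiple of $c$, whereas the open interval strictly between $0$ and $-a_i$ is either $(0,c)$ or $(-c,0)$ and contains no multiple of $c$. So no forbidden partial sum exists, no new vertex is created, and together with the interior condition this shows $\Hc$ is separating.

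The necessity of the coefficient condition is the heart of the argument, and the step I expect to require the most care. Assuming $\Hc$ is separating, the interior condition already supplies a positive and a negative coefficient. If the nonzero coefficients did not all share one absolute value, then, since both signs are present, there must be a positive/negative pair with different absolute values; indeed, were every positive absolute value equal to every negative one, the two sets of absolute values would collapse to a single common value. Write such a pair as $a_p=\alpha>0$ and $a_q=-\beta$ with $\beta>0$ and $\alpha\neq\beta$. Now I would exhibit a forbidden crossing from a two-element configuration: if $\alpha<\beta$, take the edge in direction $q$ with $T=\{p\}$, whose partial sum $\alpha$ lies in $(0,\beta)$; if $\alpha>\beta$, take the edge in direction $p$ with $T=\{q\}$, whose partial sum $-\beta$ lies in $(-\alpha,0)$. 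In either case $\Hc$ crosses the relative interior of an edge, contradicting that $\Hc$ is separating. Hence all nonzero coefficients have the same absolute value, completing the equivalence. The only delicate points are the bookkeeping of signs in the forbidden interval and the observation that a single mismatched cross-pair already forces a bad edge, so that no global analysis of all subset sums is needed.
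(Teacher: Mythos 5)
Your proof is correct and follows essentially the same route as the paper's: sufficiency via the observation that all vertex/partial-sum values are integer multiples of the common absolute value and hence cannot fall strictly between $0$ and $\pm a_i$, and necessity by exhibiting a two-coordinate edge whose relative interior is crossed whenever a positive and a negative coefficient have mismatched absolute values. Your partial-sum bookkeeping is just a more systematic packaging of the same argument, so nothing further is needed.
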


\begin{proof}
{\bf (``If'')}
Let $e$ be an edge of $[0,1]^d$.  Then
\[
e = 
\{(x_1,\dots,x_d) \, ; \,
0\le x_i\le 1
\text{ and }
x_{j} = \varepsilon_{j}
\text{ for all $j \neq i$} \},
\]
where each $\varepsilon_{j} \in \{ 0, 1 \}$.
Suppose that
there exists $p$ and $q$ with $a_{p} > 0$ and $a_{q} < 0$
and that
all nonzero coefficients of $\Hc$ 
have the same absolute value.
Then we may assume that
\[
\Hc : x_{1} + \cdots + x_{s} - x_{s+1} - \cdots - x_{s+t} = 0,
\]
where $s > 0$, $t > 0$ and $s + t \leq d$.
If $i > s + t$, then either $\Hc \cap e = e$ or 
$\Hc \cap e = \emptyset$.
If $i \leq s + t$, then
$\Hc \cap e \subset \ZZ^{d}$.
Thus each of
$[0,1]^d \cap \Hc^{(+)}$ and $[0,1]^d \cap \Hc^{(-)}$
is integral. 
Moreover, since $s > 0$ and $t > 0$,
it follows that $\Hc \cap (0,1)^d \neq \emptyset$. 
Hence $\Hc$ is a separating hyperplane of $[0,1]^d$.

{\bf (``Only if'')}
If every coefficient $a_{i}$ of (\ref{hyperplane})
is nonnegative, then
$\Hc \cap [0,1]^d$ consists only of the origin.  Hence
$\Hc$ cannot be a separating hyperplane of $[0,1]^d$.
Thus there exists $p$ and $q$ with $a_{p} > 0$ and $a_{q} < 0$.

Now, suppose that 
there exist $i \neq j$ with
$a_i \neq 0$, $a_j \neq 0$ and $|a_i| \neq |a_j|$. 
Let, say, $|a_i| < |a_j|$.
Let $e$ is the edge defined by
$x_{i} = 1$ and $x_{k} = 0$ for all $k$ with $k \not\in \{ i, j\}$.
If $a_{i}a_{j} < 0$, then $0 < - a_{i}/a_{j} < 1$ and
$v = (v_{1}, \ldots, v_{n}) \in e$
with $v_{j} = - a_{i}/a_{j}$ belongs to $\Hc$.
Thus $\Hc$ cannot be a separating hyperplane of $[0,1]^d$.
Hence $a_{i}a_{j} > 0$.
In particular $|a_{p}| = |a_{q}|$.
Let $1 \leq k \leq d$ with
$a_{k} > 0$.  Then, since $a_{k}a_{q} < 0$,
it follows that $|a_{k}| = |a_{q}|$.
Similarly if
$a_{k} < 0$, then $|a_{k}| = |a_{p}|$.
Consequently, 
$|a_{k}| = |a_{p}| \, ( = |a_{p}|)$ 
for all $k$ with $a_{k} \neq 0$, as desired.
\end{proof}

Now, by virtue of Lemma \ref{hyone}, 
it follows that
a separating hyperplane of $[0,1]^d$ passing through the origin
is of the form
\begin{eqnarray}
\label{aaaaa}
\Hc : \, x_1+\dots+x_s-x_{s+1}-\dots-x_{s+t}=0
\end{eqnarray}
with $s > 0$, $t > 0$ and $s+t\le d$.
Moreover, in (\ref{aaaaa}), 
by replacing
$x_{s+i}$ with $1-x_{s+i}$ for
$1 \leq i \leq t$,
we can work with a separating hyperplanes of $[0,1]^d$
of the form
\begin{eqnarray}
\label{bbbbb}
\Hc : x_1+\dots+x_s+x_{s+1}+\dots+x_{s+t}=t.
\end{eqnarray}
Finally,
the equation (\ref{bbbbb}) can be rewritten as
\begin{eqnarray}
\label{ccccc}
\Hc : x_1+\dots+x_{k}=\ell,
\, \, \, \, \,
2\le k\le d, \, \, 1\leq\ell<k.
\end{eqnarray}

If $\Hc$ is a separating hyperplane (\ref{ccccc})
of $[0,1]^d$, then
\[
[0,1]^d \cap \Hc^{(+)}
=\{(x_1,\dots,x_d)\in [0,1]^d \, : \, x_1+\cdots+x_k\le
\ell\}, 
\]
\[ 
[0,1]^d \cap \Hc^{(-)} 
=\{(x_1,\dots,x_d)\in [0,1]^d \, : \, x_1+\cdots+x_k\ge
\ell\}.
\]
In $[0,1]^d \cap \Hc^{(-)}$, 
again by replacing 
$x_{i}$ with $1 - x_{i}$ for $1 \leq i \leq t$,
it follows that, since $0 < k - \ell < k$,
each of the subpolytopes 
$[0,1]^d \cap \Hc^{(\pm)}$ 
is, up to unimodular equivalence, of the form 
\begin{equation}
\label{p}
\{(x_1,\dots,x_d)\in [0,1]^d \, : \, 
x_1+\cdots+x_k\le \ell\}, 
\, \, \, \, \, 
2\le k\le d, \, \, 1\leq\ell< k.
\end{equation}

\begin{corollary}
\label{thenumberof}
The number of convex polytopes of the form 
$[0,1]^d \cap \Hc^{(\pm)}$, where $\Hc$ is a separating hyperplane
of $[0,1]^d$ is, up to unimodular equivalence, $d(d-1)/2$.
\end{corollary}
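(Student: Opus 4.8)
The plan is to regard the polytope (\ref{p}) as a family $\Pc_{k,\ell}$ indexed by the pairs $(k,\ell)$ with $2\le k\le d$ and $1\le\ell\le k-1$, and to prove that the assignment $(k,\ell)\mapsto\Pc_{k,\ell}$ descends to a bijection between these pairs and the unimodular equivalence classes of the polytopes $[0,1]^d\cap\Hc^{(\pm)}$. The discussion preceding the corollary already shows that every such piece is unimodularly equivalent to some $\Pc_{k,\ell}$ in this range, and each $\Pc_{k,\ell}$ genuinely occurs, being one side of the separating hyperplane $x_1+\cdots+x_k=\ell$. Since the number of admissible pairs is $\sum_{k=2}^{d}(k-1)=\sum_{m=1}^{d-1}m=d(d-1)/2$, the corollary reduces to the injectivity statement: distinct pairs yield inequivalent polytopes.

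First I would record why the obvious numerical invariants are insufficient, since this dictates the strategy: for $d$ large enough, $\Pc_{2,1}$ and $\Pc_{4,2}$ have the same relative volume $1/2$, while $\Pc_{3,1}$ and $\Pc_{5,2}$ have the same number of vertices, so neither volume nor vertex count alone can separate all pairs. I would instead recover $k$ and $\ell$ by different means. Setting $T_{k,\ell}=\{x\in[0,1]^k:x_1+\cdots+x_k\le\ell\}$, the polytope (\ref{p}) factors as $\Pc_{k,\ell}=T_{k,\ell}\times[0,1]^{d-k}$. The key lemma would be that $T_{k,\ell}$ has no segment factor for $1\le\ell\le k-1$, that is, $T_{k,\ell}\not\cong[0,1]\times Q$. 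The facets of $T_{k,\ell}$ have outer normals among $-e_1,\dots,-e_k$, the vectors $e_1,\dots,e_k$ (the latter present only when $\ell\ge2$), and $(1,\dots,1)$; hence the only pairs of parallel facets are $\{x_i=0\}$ and $\{x_i=1\}$. These are affinely isomorphic to $T_{k-1,\ell}$ and $T_{k-1,\ell-1}$, whose vertex numbers $\sum_{i=0}^{\ell}\binom{k-1}{i}$ and $\sum_{i=0}^{\ell-1}\binom{k-1}{i}$ differ by $\binom{k-1}{\ell}\ge1$, so the two facets are not translates. A Cartesian factor $[0,1]$ would force a pair of parallel facets that are translates of one another, so no such factor exists. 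Invoking the uniqueness of the factorization of a polytope into directly indecomposable Cartesian factors, the number of segment factors of $\Pc_{k,\ell}$ equals $d-k$; this is a unimodular invariant and recovers $k$.

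With $k$ determined, I would recover $\ell$ from the vertex count. Because $\Hc$ is a separating hyperplane, every vertex of $\Pc_{k,\ell}$ is a vertex of the cube, so the vertices are exactly the $0/1$ points satisfying $x_1+\cdots+x_k\le\ell$, numbering $2^{d-k}\sum_{i=0}^{\ell}\binom{k}{i}$. For fixed $k$ this is strictly increasing in $\ell$ on the range $1\le\ell\le k-1$, since increasing $\ell$ by one adds $2^{d-k}\binom{k}{\ell+1}>0$ vertices. Hence $\ell$ is determined by $k$ together with the number of vertices, injectivity follows, and the number of equivalence classes is exactly $d(d-1)/2$.

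The main obstacle is the recovery of $k$. The collapsing of both volume and vertex count on distinct pairs shows that a purely enumerative invariant will not suffice; the argument must exploit the Cartesian product structure, and the technical heart is the verification that $T_{k,\ell}$ admits no segment factor --- equivalently, that it possesses no two facets which are translates of one another --- together with the appeal to uniqueness of the indecomposable Cartesian factorization.
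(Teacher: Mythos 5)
Your proposal is correct, and it goes well beyond what the paper actually does: the paper offers no proof of Corollary \ref{thenumberof} at all, treating it as immediate from the reduction of $[0,1]^d\cap\Hc^{(\pm)}$ to the normal form (\ref{p}) and then counting the admissible pairs $(k,\ell)$ with $2\le k\le d$, $1\le\ell<k$, which gives $\sum_{k=2}^{d}(k-1)=d(d-1)/2$. That establishes only the upper bound on the number of equivalence classes; the claim that distinct pairs yield unimodularly \emph{inequivalent} polytopes is left implicit, and you correctly identify it as the real content, noting that neither volume nor vertex count alone separates all pairs (your examples $\Pc_{2,1}$ versus $\Pc_{4,2}$ and $\Pc_{3,1}$ versus $\Pc_{5,2}$ check out). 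Your injectivity argument is sound: the facet analysis of $T_{k,\ell}$ is right (the only parallel facet pairs are $\{x_i=0\}$ and $\{x_i=1\}$, present only for $\ell\ge2$, and these are not translates since their vertex counts differ by $\binom{k-1}{\ell}\ge1$), and the vertex count $2^{d-k}\sum_{i=0}^{\ell}\binom{k}{i}$ does recover $\ell$ once $k$ is known. The one soft spot is the appeal to uniqueness of the factorization into indecomposable Cartesian factors, which is true but nontrivial and should be cited; you can avoid it entirely with an invariant you have already computed, namely the number of unordered pairs of facets of $\Pc_{k,\ell}$ that are translates of one another. By your own analysis this number equals exactly $d-k$ (the pairs $x_j=0$, $x_j=1$ for $j>k$), it is manifestly preserved by affine lattice isomorphisms, and it recovers $k$ directly without any factorization theorem. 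With that substitution your argument is a clean, complete proof of a statement the paper merely asserts.
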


We now turn to the problem of finding a separating hyperplane
of (\ref{p}).
We say that a separating hyperplane of (\ref{p})
is a {\em second separating hyperplane of $[0,1]^d$
following} (\ref{ccccc}). 

\begin{lemma}
\label{general}
Each of the separating hyperplanes of $[0,1]^d$ is of the form
\[
\sum_{i \in I} x_{i} - \sum_{j \in J} x_{j} = h, 
\]
where $\emptyset \neq I \subset [\,d\,]$,
$\emptyset \neq J \subset [\,d\,]$,
$I \cap J = \emptyset$
and where $h \geq 0$ is an integer
with $0 \leq h < \sharp(I)$. 
\end{lemma}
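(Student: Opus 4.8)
The plan is to reduce the general statement to the through-origin case already settled in Lemma \ref{hyone} by exploiting the reflection symmetries of the cube. The first and key step is to show that \emph{every} separating hyperplane $\Hc$ of $[0,1]^d$ must contain at least one vertex. Indeed, if $\Hc$ contained no vertex, then for every edge $e=\conv(\{v,v'\})$ the cutting condition $\Hc\cap e\subset\{v,v'\}$ would force $\Hc\cap e=\emptyset$, so both endpoints of every edge would lie strictly on the same side of $\Hc$; since the $1$-skeleton of $[0,1]^d$ is connected, all vertices would then lie strictly on one side, and $\Hc$ could not meet $(0,1)^d$, contradicting that $\Hc$ cuts $[0,1]^d$. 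Note also that $\Hc$ is rational: its two pieces are integral $d$-polytopes, so the new facet $\Hc\cap[0,1]^d$ has rational affine hull, which legitimizes the use of Lemma \ref{hyone}.

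Having fixed a vertex $v^\ast\in\Hc\cap\{0,1\}^d$, I would apply the unimodular automorphism $\sigma$ of $[0,1]^d$ that reflects exactly the coordinates $k$ with $v^\ast_k=1$, that is, $\sigma(x)_k=1-x_k$ for such $k$ and $\sigma(x)_k=x_k$ otherwise. Then $\sigma$ carries $v^\ast$ to the origin and carries $\Hc$ to a separating hyperplane $\sigma(\Hc)$ passing through the origin. By Lemma \ref{hyone} together with the normalization leading to (\ref{aaaaa}), $\sigma(\Hc)$ has the form $\sum_{i\in I'}x_i-\sum_{j\in J'}x_j=0$ with $I',J'$ nonempty and disjoint.

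Transporting back through $\sigma$ (which is its own inverse), each substitution $x_k\mapsto 1-x_k$ merely flips the sign of the corresponding coefficient and contributes a constant, so $\Hc$ acquires the form $\sum_k c_k x_k=h$ with every $c_k\in\{-1,0,1\}$ and $h\in\ZZ$. Writing $I=\{k:c_k=1\}$ and $J=\{k:c_k=-1\}$ yields the desired shape $\sum_{i\in I}x_i-\sum_{j\in J}x_j=h$ with $I\cap J=\emptyset$ and $I\cup J\neq\emptyset$.

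It remains to pin down $h$. The linear functional $f=\sum_{i\in I}x_i-\sum_{j\in J}x_j$ has range $[-\sharp(J),\sharp(I)]$ on the cube and maps the open cube onto the open interval $(-\sharp(J),\sharp(I))$; since $\Hc$ meets the interior, $h$ lies in this open interval. Finally, replacing the defining equation by its negative interchanges $I$ and $J$ and sends $h\mapsto -h$, so I may normalize $h\geq 0$, which gives $0\le h<\sharp(I)$ (and forces $I\neq\emptyset$). I expect the genuinely delicate point to be the very first step -- producing a vertex lying on $\Hc$ -- since it is precisely what allows the entire problem to be pulled back to Lemma \ref{hyone}; once that reduction is in place, the coefficient bookkeeping and the determination of the range of $h$ are routine.
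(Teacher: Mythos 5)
Your proof is correct and follows essentially the same route as the paper: fix a vertex of $[0,1]^d$ lying on $\Hc$, reflect it to the origin via $x_k \mapsto 1-x_k$ so that Lemma \ref{hyone} applies, transport the coefficients back, and then bound $h$ by noting that otherwise the cube would lie entirely in one closed half-space. You additionally justify two points the paper leaves implicit --- that a separating hyperplane must contain a vertex and that its defining equation is rational --- which refines rather than changes the argument.
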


\begin{proof}
Let $v = (v_{1}, \ldots, v_{d})$ be a vertex of $[0,1]^d$. 
Let $I \subset [\,d\,]$ and $J \subset [\,d\,]$ 
with $I \cup J = [\,d\,]$ and $I \cap J = \emptyset$
such that $v_{i} = 0$ if $i \in I$
and $v_{j} = 1$ if $j \in J$. 
Let 
\begin{eqnarray}
\label{hypergeneral}
\Hc : a_{1} x_{1} + \cdots + a_{d}x_{d} = \sum_{j \in J} a_{j},
\end{eqnarray}
with each $a_{i} \in \QQ$,
be a separating hyperplane of $[0,1]^d$ passing through $v$.
In (\ref{hypergeneral})
replace $x_{j}$ with $1 - x_{j}$ for $j \in J$, and the hyperplane
\begin{eqnarray}
\label{hypergeneralyyyyy}
\Hc' \, : \, 
\sum_{i \in I} a_{i}x_{i} + \sum_{j \in J} (- a_{j})x_{j} = 0
\end{eqnarray}
is a separating hyperplane of $[0,1]^d$ passing through the origin.
It then follows from Lemma \ref{hyone} 
that all nonzero coefficients 
of (\ref{hypergeneralyyyyy}) have the same absolute value.
Thus each of $a_{i}$'s and $a_{j}$' belongs to $\{ 0, \pm 1 \}$. 
It turns out that the equation (\ref{hypergeneral}) is 
\[
\Hc \, : \, \sum_{p \in I'} x_{p} - \sum_{q \in J'} x_{q} = h
\]
where $\emptyset \neq I' \subset [\,d\,]$, 
$\emptyset \neq J' \subset [\,d\,]$, 
$I \cap J = \emptyset$
and where $h \geq 0$ is an integer.
If $h \geq \sharp(I')$, then $[0,1]^d \subset \Hc^{(+)}$
or $[0,1]^d \subset \Hc^{(-)}$.
Hence $0 \leq h < \sharp(I')$.  If $0 \leq h < \sharp(I')$,
then 
\[
(\Hc^{(+)} \setminus \Hc) \cap [0,1]^d \neq \emptyset,
\, \, \, \, \, 
(\Hc^{(-)} \setminus \Hc) \cap [0,1]^d \neq \emptyset.
\] 
Thus $\Hc$ is, in fact, a separating hyperplane of $[0,1]^d$. 
\end{proof}

Let $\Hc' \subset \RR^{d}$ 
be a second separating hyperplane
of $[0,1]^d$ following (\ref{ccccc}).  
Clearly $\Hc'$ is a separating hyperplane of $[0,1]^d$.
It then follows from Theorem \ref{general} that
\begin{eqnarray}
\label{ddddd}
\Hc' \, : \, \sum_{i \in I} x_{i} - \sum_{j \in J} x_{j} = h, 
\end{eqnarray}
where $\emptyset \neq I \subset [\,d\,]$,
$\emptyset \neq J \subset [\,d\,]$,
$I \cap J = \emptyset$
and where $h \geq 0$ is an integer
with $0 \leq h < \sharp(I)$. 

\begin{theorem}
\label{second}
A 
hyperplane $\Hc'$ of {\rm (\ref{ddddd})} 
is a second separating hyperplane of $[0,1]^d$ following {\rm (\ref{ccccc})}
if and only if one of the following conditions 
is satisfied\hspace{0.25mm}{\rm : }
\begin{itemize}
\item $\sharp(J)+h+k-\sharp(X)\le \ell$\hspace{0.2mm}{\rm ; }
\item $\sharp(I)-h+k-\sharp(Y)\le \ell$\hspace{0.2mm}{\rm , }
\end{itemize}
where 
$X=I\cap[\,k\,]$ and $Y=J\cap[\,k\,]$.
\end{theorem}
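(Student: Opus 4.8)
The plan is to decide, for a hyperplane $\Hc'$ already known to be of the form (\ref{ddddd}), when it also separates the polytope (\ref{p}); write $P$ for that polytope and $g(x)=\sum_{i\in I}x_i-\sum_{j\in J}x_j$, so that $\Hc'=\{g=h\}$. By the criterion recorded at the start of the paper, $\Hc'$ separates $P$ exactly when it meets the relative interior of $P$ and cuts every edge of $P$ only at vertices (equivalently, both $P\cap\Hc'^{(+)}$ and $P\cap\Hc'^{(-)}$ are integral of dimension $d$). The first step I would take is a reduction: the edges of $P$ fall into those inherited from edges of $[0,1]^d$ that survive the first cut and those lying on the new facet $F=P\cap\{x_1+\cdots+x_k=\ell\}$. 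Since $\Hc'$ is already a separating hyperplane of $[0,1]^d$ by Lemma \ref{general}, it meets each cube edge only at a vertex, so the inherited edges impose nothing; the entire statement is therefore governed by $F$.

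Next I would analyze $F$. Because the first cut constrains only the sum of the coordinates in $[\,k\,]$, the facet $F$ is unimodularly a product $\Delta_{k,\ell}\times[0,1]^{d-k}$, where $\Delta_{k,\ell}=\{y\in[0,1]^k:y_1+\cdots+y_k=\ell\}$ is a hypersimplex. Its edges are of two kinds: those coming from the cube factor $[0,1]^{d-k}$, which are cube edges and are again cut only at vertices, and the hypersimplex edges, along which exactly two coordinates $x_a,x_b$ with $a,b\in[\,k\,]$ vary subject to $x_a+x_b=1$ while the other coordinates stay at $0/1$ values. Restricting $g$ to such an edge and substituting $x_b=1-x_a$ gives an affine function of $x_a$ with integer endpoint values. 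A short case check on the signs of $a,b$ in $g$ is the heart of the reduction: if $a,b$ have opposite signs (one in $I$, one in $J$) the endpoint values differ by $2$ and $\Hc'$ can meet the edge interior at the single non-vertex point $x_a=\tfrac12$, forbidding $h$ equal to the integer halfway between them; if $a,b$ have equal sign (both in $I$, both in $J$, or both outside $I\cup J$) the edge is parallel to $\Hc'$ and is fully contained in it exactly when $g$ takes the value $h$ there; in the remaining mixed cases the endpoints differ by $1$ and no interior integer value is crossed.

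The remaining step is to sweep over all hypersimplex edges and all admissible fixings of the other coordinates under $\sum_{i\in[\,k\,]}x_i=\ell$, and collect the forbidden values of $h$. This is a small greedy optimization: to push the critical value as high as possible one sets the still-free coordinates of $I$ to $1$ and those of $J$ to $0$ as far as the budget $\ell$ permits, absorbing the slack in the coordinates of $[\,k\,]$ lying in neither $I$ nor $J$; the mirror optimization gives the lower extreme. Carrying this out yields the two threshold quantities in the statement, in which $\sharp(X)=\sharp(I\cap[\,k\,])$ and $\sharp(Y)=\sharp(J\cap[\,k\,])$ appear precisely because they measure how much of the budget is already spent inside $[\,k\,]$. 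An edge of $F$ is then cut improperly exactly when $h$ lies strictly between the two thresholds, that is, when both displayed inequalities fail; negating gives the asserted equivalence, with $\sharp(J)+h+k-\sharp(X)\le\ell$ and $\sharp(I)-h+k-\sharp(Y)\le\ell$ corresponding to $F$ lying on the two respective sides of $\Hc'$.

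I expect the main obstacle to be exactly this extremal optimization together with its boundary bookkeeping. One must separate the regimes according to whether $\ell$ is smaller than, equal to, or larger than the number of $I$- (respectively $J$-) coordinates forced inside $[\,k\,]$, and check that the resulting optima collapse to the two clean expressions in every regime; it is also here that the distinction between strict and non-strict inequalities (hence the exact placement of the boundary values of $h$) is settled, and where the contained, $\Hc'$-parallel edges must be weighed against the genuine interior crossings. Two further points need care in the writeup: confirming that at the boundary values $\Hc'$ still meets the interior of $P$, so that it is a separating and not merely a supporting hyperplane, and verifying the converse direction that control of the one-dimensional faces already forces integrality of both pieces, so that no further non-vertex intersection can be hidden on a higher-dimensional face of $P$.
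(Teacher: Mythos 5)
Your plan is genuinely different from the paper's. The paper never touches the edges of the subpolytope $\Pc$ of (\ref{p}) at all: it asserts at the outset that $\Hc'$ is a second separating hyperplane following (\ref{ccccc}) if and only if $\Hc'\cap[0,1]^d\subset\Pc$, and then proves that this \emph{containment} is equivalent to the two displayed inequalities, by direct estimates of $v_1+\cdots+v_k$ for $v\in\Hc'\cap[0,1]^d$ in the ``if'' direction, and by a greedy construction of a point $v\in\Hc'\cap[0,1]^d$ with $v_1+\cdots+v_k>\ell$ in the ``only if'' direction. Your route instead characterizes the cutting property of $\Pc$ intrinsically, via the product structure $\Delta_{k,\ell}\times[0,1]^{d-k}$ of the new facet $F$ and the two-coordinate description of hypersimplex edges; that reduction (inherited cube edges are harmless, only edges of $F$ matter) is sound and is the honest way to decide whether $\Hc'$ cuts $\Pc$.

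The gap is the step you defer: ``carrying this out yields the two threshold quantities in the statement.'' It does not. Your criterion flags a bad crossing only when there are $a\in X$, $b\in Y$ and an admissible $0/1$ completion $v$ with $h=\sum_{i\in I\setminus\{a\}}v_i-\sum_{j\in J\setminus\{b\}}v_j$, and optimizing this produces thresholds that disagree with the theorem's. Concretely, take $d=3$, $k=2$, $\ell=1$, so $\Pc$ is the prism $\{x\in[0,1]^3: x_1+x_2\le1\}$, and $\Hc': x_3-x_1=0$, i.e.\ $I=\{3\}$, $J=\{1\}$, $h=0$, hence $X=\emptyset$, $Y=\{1\}$. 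Since $X=\emptyset$ there is no bad hypersimplex edge, and checking all nine edges of the prism confirms that $\Hc'$ cuts $\Pc$ (both pieces are integral and $\Hc'$ contains the interior point $(0.3,0.3,0.3)$); yet both inequalities of the theorem fail, since $\sharp(J)+h+k-\sharp(X)=3>1$ and $\sharp(I)-h+k-\sharp(Y)=2>1$. So your method, executed correctly, refutes the ``only if'' direction of the statement rather than proving it. The discrepancy is inherited from the paper's unproved starting equivalence: a point of $\Hc'\cap[0,1]^d$ lying outside $\Pc$ (here $(1,1,1)$) says nothing about whether $\Hc'$ cuts $\Pc$, so the containment $\Hc'\cap[0,1]^d\subset\Pc$ is sufficient but not necessary, and the theorem's conditions characterize that containment rather than the separating property. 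Separately, the interior condition you flag at the end is not a formality: for $\Pc=\{x\in[0,1]^3:x_1+x_2+x_3\le1\}$ and $\Hc': x_1+x_2-x_3=1$ your edge sweep finds no bad edge either, but $\Hc'$ merely supports $\Pc$ along the edge $\conv(\{e_1,e_2\})$, so the edge analysis over $F$ must be supplemented by an explicit check that $\Hc'$ meets the interior of $\Pc$.
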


\begin{proof}
Let $\Pc \subset \RR^{d}$ denote the subpolytope 
(\ref{p}) of $[0,1]^d$.
Then a hyperplane $\Hc'$ of {\rm (\ref{ddddd})} 
is a second separating hyperplane of $[0,1]^d$
following {\rm (\ref{ccccc})}
if and only if one has
$\Hc' \cap [0,1]^d \subset \Pc$.

{\bf (``If'')}
Let $v = (v_{1}, \ldots, v_{d}) \in [0,1]^d$ 
belong to $\Hc'$, i.e.,
$\sum_{i \in I} v_{i} - \sum_{j \in J} v_{j} = h$.
If $\sharp(J)+h+k-\sharp(X)\le \ell$, then
\begin{eqnarray*}
v_{1} + \cdots + v_{k}
& = & \sum_{i \in X} v_{i} 
+ \sum_{i \in [\,k\,] \setminus X} v_{i} \\
& \leq & \sum_{i \in I} v_{i} 
+ \sum_{i \in [\,k\,] \setminus X} v_{i} \\ 
& = & h + \sum_{j \in J} v_{j} + \sum_{i \in [\,k\,] \setminus X} v_{i} \\
& \leq & h + \sharp(J) + k - \sharp(X) \leq \ell.
\end{eqnarray*}
Hence $v \in \Pc$.
If $\sharp(I)-h+k-\sharp(Y)\le \ell$, then
\begin{eqnarray*}
v_{1} + \cdots + v_{k}
& = & \sum_{i \in Y} v_{i} 
+ \sum_{i \in [\,k\,] \setminus Y} v_{i} \\
& \leq & \sum_{i \in J} v_{i} 
+ \sum_{i \in [\,k\,] \setminus Y} v_{i} \\ 
& = & h + \sum_{j \in I} v_{j} + \sum_{i \in [\,k\,] \setminus Y} v_{i} \\
& \leq & h + \sharp(I) + k - \sharp(Y) \leq \ell.
\end{eqnarray*}
Hence $v \in \Pc$.

{\bf (``Only if'')}
Let 
$\sharp(J)+h+k-\sharp(X) > \ell$ and 
$\sharp(I)-h+k-\sharp(Y) > \ell$.
We claim the existence of
$v = (v_{1}, \ldots, v_{d}) \in [0,1]^d$
with $v \in \Hc'$ such that
$v_{1} + \cdots + v_{k} > \ell$.

Let $\sharp(I) \le \sharp(J)+h$.
Then $0 \leq h < \sharp(I) \le \sharp(J)+h$.
Thus there is
$v \in [0,1]^d$ belonging to $\Hc'$
with $v_{i} = 1$ for all $i \in I$
such that if
$j \in Y$ and $j' \in J \setminus Y$
then $v_{j} \geq v_{j'}$. 
Such $v \in \Hc' \cap [0,1]^d$ can be chosen with $v_{i} = 1$
for all $[\,k\,] \setminus (X \cup Y)$.
Then
\begin{eqnarray*}
v_{1} + \cdots + v_{k}
& \geq & \sharp(X) + \min\{\sharp(I) - h, \sharp(Y)\} +   
\sharp([\,k\,] \setminus (X \cup Y)) \\
& = & \sharp(X) + \min\{\sharp(I) - h, \sharp(Y)\} 
+ k - \sharp(X) - \sharp(Y) \\
& = & \min\{\sharp(I) - h, \sharp(Y)\} 
+ k - \sharp(Y) \\
& = & \min\{\sharp(I) - h + k - \sharp(Y), k\}.
\end{eqnarray*}
Since $\sharp(I) - h + k - \sharp(Y) > \ell$
and $k > \ell$, it follows that $v_{1} + \cdots + v_{k} > \ell$.

Let $\sharp(I) > \sharp(J)+h$.
Then there is
$v \in [0,1]^d$ belonging to $\Hc'$
with $v_{j} = 1$ for all $j \in J$
such that if
$i \in X$ and $i' \in I \setminus X$
then $v_{i} \geq v_{i'}$. 
Such $v \in \Hc' \cap [0,1]^d$ can be chosen with $v_{i} = 1$
for all $[\,k\,] \setminus (X \cup Y)$.
Then
\begin{eqnarray*}
v_{1} + \cdots + v_{k}
& \geq &
\sharp(Y) + \min\{\sharp(J) + h, \sharp(X)\} +   
\sharp([\,k\,] \setminus (X \cup Y)) \\
& = & \sharp(Y) + \min\{\sharp(J) + h, \sharp(X)\} 
+ k - \sharp(X) - \sharp(Y) \\
& = & \min\{\sharp(J) + h, \sharp(X)\} 
+ k - \sharp(X) \\
& = & \min\{\sharp(J) + h + k - \sharp(X), k\}.
\end{eqnarray*}
Since $\sharp(J) + h + k - \sharp(X) > \ell$
and $k > \ell$, it follows that $v_{1} + \cdots + v_{k} > \ell$.
\end{proof}

\begin{corollary}
\label{simple}
Let
\begin{eqnarray}
\label{fffff}
x_{1} + \cdots + x_{d} = \ell, \, \, \, \, \, 1 \leq \ell < d
\end{eqnarray}
be a separating hyperplane of $[0, 1]^{d}$.  Then
a hyperplane
\[
x_{1} + \cdots + x_{s} = x_{s+1} + \cdots + x_{s+t} + h, \, \, \, \, \, 0 \leq h < s 
\]
is a second separating hyperplane of $[0, 1]^{d}$ 
following {\rm (\ref{fffff})} if and only if 
one of the following conditions is satisfied\hspace{0.25mm}{\rm : }
\begin{itemize}
\item $d - \ell \leq s - (t + h)$\hspace{0.2mm}{\rm ; }
\item $d - \ell \leq (t + h) -s$\hspace{0.2mm}{\rm . }
\end{itemize}
\end{corollary}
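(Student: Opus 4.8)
The plan is to recognize Corollary \ref{simple} as the specialization of Theorem \ref{second} to the case $k = d$, so that no new argument beyond a substitution and a rearrangement of inequalities is needed.

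First I would identify the data so that the two statements line up. The hyperplane (\ref{fffff}) is exactly (\ref{ccccc}) with $k = d$ (and the same $\ell$, where indeed $1 \leq \ell < d = k$). The candidate hyperplane $x_{1} + \cdots + x_{s} = x_{s+1} + \cdots + x_{s+t} + h$ rewrites as $\sum_{i=1}^{s} x_{i} - \sum_{j=s+1}^{s+t} x_{j} = h$, which is of the form (\ref{ddddd}) with $I = \{1, \ldots, s\}$ and $J = \{s+1, \ldots, s+t\}$. Here $s + t \leq d$ is forced by the coordinates that appear, the sets $I$ and $J$ are nonempty and disjoint, and $0 \leq h < s = \sharp(I)$, so all the hypotheses needed to invoke Theorem \ref{second} are met.

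Next I would compute the quantities $X$ and $Y$ appearing in Theorem \ref{second}. Since $k = d$ we have $[\,k\,] = [\,d\,]$, and because $I, J \subset [\,d\,]$ this gives $X = I \cap [\,k\,] = I$ and $Y = J \cap [\,k\,] = J$, whence $\sharp(X) = \sharp(I) = s$ and $\sharp(Y) = \sharp(J) = t$. This is the only point that genuinely uses the hypothesis $k = d$: for a general $k$ one could have $X \subsetneq I$ or $Y \subsetneq J$, and the two conditions would not collapse to the clean form stated in the corollary.

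Finally I would substitute these values into the two conditions of Theorem \ref{second}. The first condition $\sharp(J) + h + k - \sharp(X) \leq \ell$ becomes $t + h + d - s \leq \ell$, which rearranges to $d - \ell \leq s - (t + h)$; the second condition $\sharp(I) - h + k - \sharp(Y) \leq \ell$ becomes $s - h + d - t \leq \ell$, which rearranges to $d - \ell \leq (t + h) - s$. Since Theorem \ref{second} asserts that $\Hc'$ is a second separating hyperplane precisely when at least one of its two conditions holds, the claimed equivalence follows immediately. I expect no real obstacle here: the mathematical content is entirely carried by Theorem \ref{second}, and the corollary is simply its transcription under the substitution $k = d$, $I = \{1, \ldots, s\}$, $J = \{s+1, \ldots, s+t\}$.
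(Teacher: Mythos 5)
Your proposal is correct and is exactly the intended derivation: the paper states Corollary \ref{simple} without proof as an immediate specialization of Theorem \ref{second}, and your substitution $k=d$, $I=\{1,\ldots,s\}$, $J=\{s+1,\ldots,s+t\}$ (so $X=I$, $Y=J$, $\sharp(X)=s$, $\sharp(Y)=t$) turns the theorem's two conditions into precisely the two stated inequalities. Nothing is missing.
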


%
%
\section{Order and chain polytopes}
Let $P = \{x_{1}, \ldots, x_{d}\}$ be a finite 
partially ordered set (\,{\em poset} for short).
To each subset $W \subset P$, we associate $\rho(W) = \sum_{i \in W}\eb_{i} \in \RR^{d}$,
where $\eb_{1}, \ldots, \eb_{d}$ are the unit coordinate vectors of $\RR^{d}$.
In particular $\rho(\emptyset)$ is the origin of $\RR^{d}$.
A {\em poset ideal} of $P$ is a subset $I$ of $P$ such that,
for all $x_{i}$ and $x_{j}$ with
$x_{i} \in I$ and $x_{j} \leq x_{i}$, one has $x_{j} \in I$.
An {\em antichain} of $P$ is a subset
$A$ of $P$ such that $x_{i}$ and $x_{j}$ belonging to $A$ with $i \neq j$ are incomparable.
We say that $x_{j}$ {\em covers} $x_{i}$ if $x_{i} < x_{j}$ and
$x_{i} < x_{k} < x_{j}$ for no $x_{k} \in P$.
A chain $x_{j_{1}} < x_{j_{2}} < \cdots < x_{j_{\ell}}$ of $P$ is called
{\em saturated} if $x_{j_{q}}$ covers $x_{j_{q-1}}$ for $1 < q \leq \ell$. A {\em maximal chain} is a
saturated chain such that $x_{j_{1}}$ is a minimal element and $x_{j_{\ell}}$ is a maximal element of the poset. 

The {\em order polytope} of $P$ is the convex polytope $\Oc(P) \subset \RR^{d}$
which consists of those $(a_{1}, \ldots, a_{d}) \in \RR^{d}$ such that
$0 \leq a_{i} \leq 1$ for every $1 \leq i \leq d$ together with
\[
a_{i} \geq a_{j}
\]
if $x_{i} \leq x_{j}$ in $P$.

The {\em chain polytope} of $P$ is the convex polytope $\Cc(P) \subset \RR^{d}$
which consists of those $(a_{1}, \ldots, a_{d}) \in \RR^{d}$ such that
$a_{i} \geq 0$ for every $1 \leq i \leq d$ together with
\[
a_{i_{1}} + a_{i_{2}} + \cdots + a_{i_{k}} \leq 1
\]
for every maximal chain $x_{i_{1}} < x_{i_{2}} < \cdots < x_{i_{k}}$ of $P$.

One has $\dim \Oc(P) = \dim \Cc(P) = d$. The number of vertices of $\Oc(P)$ is equal to that of $\Cc(P)$.
Moreover, the volume of $\Oc(P)$ and that of $\Cc(P)$ are equal to $e(P)/d!$, where
$e(P)$ is the number of linear extensions of $P$ (\cite[Corollary 4.2]{Stanley}). It also follows from \cite{Stanley}
that the facets of $\Oc(P)$ are the following:
\begin{itemize}
\item
$x_i = 0$, where $x_i \in P$ is maximal;
\item
$x_j = 1$, where $x_j \in P$ is minimal;
\item
$x_{i} = x_{j}$, where $x_j$ covers $x_i$,
\end{itemize}
and that the facets of $\Cc(P)$ are the following:
\begin{itemize}
\item
$x_i = 0$ for all $x_i \in P$;
\item
$x_{i_{1}} + \cdots + x_{i_{k}} = 1$,
where $x_{i_{1}} < \cdots < x_{i_{k}}$
is a maximal chain of $P$.
\end{itemize} Moreover, we have the following descriptions for vertices, which will be used frequently in this section.
\begin{lemma}[\cite{Stanley}]\label{vertex}
 \begin{enumerate}
  \item Each vertex of $\Oc(P)$ is
 $\rho(I)$ such that $I$ is a poset ideal of $P$;
  \item each vertex of $\Cc(P)$ is
 $\rho(A)$ such that $A$ is an antichain of $P$.
 \end{enumerate}

\end{lemma}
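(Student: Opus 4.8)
The plan is to prove both parts in parallel by showing that $\Oc(P)$ and $\Cc(P)$ are each the convex hull of the $0/1$ vectors they contain, and then identifying those vectors. Both polytopes sit inside the cube $[0,1]^d$: for $\Oc(P)$ this is built into the defining inequalities, while for $\Cc(P)$ one notes that every $x_i$ lies in some maximal chain $C$, so $a_i \le \sum_{x_j \in C} a_j \le 1$ by nonnegativity of the coordinates. Consequently any $0/1$ point of either polytope is a vertex of the cube, hence an extreme point of the polytope. It therefore suffices to (a) identify which $0/1$ vectors lie in each polytope, and (b) show that every point of each polytope is a convex combination of such $0/1$ vectors; together these pin the vertex set down to be exactly the $0/1$ vectors contained in the polytope.

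Step (a) is a direct reading of the definitions. A $0/1$ vector $\rho(W)$ lies in $\Oc(P)$ exactly when the constraints $a_i \ge a_j$ for $x_i \le x_j$ hold, i.e. when $W$ is closed downward, which is the definition of a poset ideal; and $\rho(W)$ lies in $\Cc(P)$ exactly when $|W \cap C| \le 1$ for every maximal chain $C$, i.e. when $W$ is an antichain. So the $0/1$ points of $\Oc(P)$ are the $\rho(I)$ with $I$ a poset ideal, and those of $\Cc(P)$ are the $\rho(A)$ with $A$ an antichain.

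The substance is step (b). For the order polytope, given $a \in \Oc(P)$ I would use the superlevel sets $I_t = \{x_k : a_k \ge t\}$ for $t \in (0,1]$: the inequalities $a_i \ge a_j$ for $x_i \le x_j$ force each $I_t$ to be a poset ideal, and if the distinct coordinate values of $a$ together with $0$ are written $0 = \mu_0 < \mu_1 < \cdots < \mu_s$, a coordinatewise check gives $a = \sum_{k=1}^s (\mu_k - \mu_{k-1})\rho(I_{\mu_k}) + (1-\mu_s)\rho(\emptyset)$, a convex combination of poset-ideal vectors. For the chain polytope the analogue is the main obstacle and needs a transfer-type auxiliary: for $a \in \Cc(P)$ set $h_j = \max\{\sum_{x_i \in C} a_i\}$ over all chains $C$ of $P$ whose top element is $x_j$. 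Nonnegativity and the chain inequalities give $0 \le h_j - a_j$ and $h_j \le 1$, and appending $x_j$ to an optimal chain ending at $x_i$ yields $h_j \ge h_i + a_j$ whenever $x_i < x_j$. I would then put $A_t = \{x_j : h_j - a_j < t \le h_j\}$ for $t \in (0,1]$; since $x_i < x_j$ gives $h_j - a_j \ge h_i$, no two comparable elements can both lie in $A_t$, so $A_t$ is an antichain, while the interval $(h_j - a_j, h_j]$ has length $a_j$ and lies in $(0,1]$, whence $a = \int_0^1 \rho(A_t)\,dt$. As $A_t$ is constant on the finitely many subintervals cut out by the values $h_j$ and $h_j - a_j$, this integral is a finite convex combination of antichain vectors, completing step (b).

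I expect the chain-polytope decomposition to be the hard part, precisely because the defining inequalities of $\Cc(P)$ are sums over chains rather than single coordinates, so the naive superlevel sets of $a$ need not be antichains; the heights $h_j$ are exactly the device that repairs this, and the real work is verifying the three inequalities they satisfy and that each $A_t$ is an antichain. The remaining bookkeeping—that the displayed combinations reproduce $a$ coordinatewise and that the coefficients are nonnegative and sum to one—is routine.
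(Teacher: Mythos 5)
Your proof is correct. The paper does not prove this lemma at all---it cites it from Stanley's \emph{Two poset polytopes}---and your argument (superlevel sets $I_t$ for $\Oc(P)$, and the chain-height function $h_j$ with the antichains $A_t = \{x_j : h_j - a_j < t \le h_j\}$ for $\Cc(P)$) is essentially Stanley's original proof, with all the key verifications ($h_j \le 1$, $h_j \ge h_i + a_j$ for $x_i < x_j$, and the coordinatewise identities) carried out correctly.
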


\subsection{Existence of seperating hyerplanes for order and chain polytopes}

In this subsection, we study the existence of separating hyperplanes of order polytopes and chain polytopes 
(see Theorem \ref{SepHypOC}). First we need 
an explicit description of edges in terms of vertices.

\begin{lemma}
\label{orderedge}
Let $I$ and $J$ be poset ideals of $P$ with $I \neq J$.
Then
$\conv(\{ \rho(I), \rho(J) \})$ forms an edge of $\op$ if and only if 
$I\subset J$ and $J \backslash I$ is connected in $P$.
\end{lemma}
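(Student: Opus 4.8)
The plan is to characterize exactly when the segment joining two order-polytope vertices $\rho(I)$ and $\rho(J)$ is an edge, by combining the facet description of $\Oc(P)$ given above with the general principle that an edge of a polytope is a one-dimensional face, i.e. a segment between two vertices that lie together on a collection of facets whose defining hyperplanes intersect in a line. First I would observe that $\conv(\{\rho(I),\rho(J)\})$ is an edge precisely when no third vertex $\rho(K)$ lies in the relative interior of any face containing both $\rho(I)$ and $\rho(J)$; equivalently, one cannot write $\tfrac12(\rho(I)+\rho(J))$ nontrivially as a convex combination of vertices other than $\rho(I),\rho(J)$. The coordinates of $\rho(I)$ and $\rho(J)$ are all $0/1$, so the midpoint has coordinate $1$ on $I\cap J$, coordinate $0$ off $I\cup J$, and coordinate $\tfrac12$ exactly on the symmetric difference $I\triangle J$.

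For the forward direction I would show that if $\conv(\{\rho(I),\rho(J)\})$ is an edge then $I$ and $J$ are comparable, say $I\subset J$, and $J\setminus I$ is connected in the comparability (Hasse) sense on $P$. The plan is to argue contrapositively: if $I,J$ are incomparable, then setting $A=I\cap J$ and $B=I\cup J$ gives two further poset ideals (using that intersections and unions of ideals are ideals) with $\rho(I)+\rho(J)=\rho(A)+\rho(B)$, exhibiting the midpoint as a convex combination of two different vertices and contradicting edge-ness. Similarly, if $I\subset J$ but $J\setminus I$ splits as a disjoint union $C\sqcup D$ of two nonempty subsets that are "disconnected" (no element of one covers or is covered by an element of the other, and each is down-closed relative to $I$), then $I\cup C$ and $I\cup D$ are ideals giving an analogous alternative decomposition of the midpoint.

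For the reverse direction I would assume $I\subset J$ with $J\setminus I$ connected and produce a linear functional (or equivalently a face of the right dimension) witnessing that the segment is an edge. The natural approach is to specify which facet inequalities are tight along the whole segment and to count: the vertices $\rho(I),\rho(J)$ together satisfy with equality all the facet relations $x_i=0$ (for $i\notin J$), $x_j=1$ (for $j\in I$), and the order relations $x_i=x_j$ for covering pairs $x_i,x_j$ that do not straddle the "boundary" of $J\setminus I$; I would check that these tight facets cut out exactly a line, using the connectedness of $J\setminus I$ to guarantee that the coordinates on $J\setminus I$ are forced to move together as a single free parameter.

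The main obstacle I expect is the reverse direction: translating "connected in $P$" into the precise statement that the tight facet hyperplanes have intersection of dimension one. Connectedness ensures the covering relations within $J\setminus I$ link all its elements, so the relations $x_i=x_j$ force a single common value on $J\setminus I$ while $I$ stays at $1$ and the complement of $J$ stays at $0$; the delicate point is verifying that no additional forced equalities collapse the line to a point and that the chosen inequalities are genuinely valid (i.e. satisfied, not violated, by the relative interior of the segment). I would handle this by exhibiting an explicit supporting functional, for instance $f=\sum_{m}x_m$ summed over the elements $m\in J\setminus I$, whose value is minimized exactly on $\conv(\{\rho(I),\rho(J)\})$, and confirming via Lemma \ref{vertex} that $\rho(I),\rho(J)$ are the only vertices attaining that minimum.
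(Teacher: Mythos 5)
Your forward direction is correct and takes a genuinely different, more elementary route than the paper. The paper first reduces by induction to the case $I \cap J = \emptyset$, $I \cup J = P$, and then rules out the bad configurations with supporting-hyperplane arguments; you instead observe that $\rho(I)+\rho(J)=\rho(I\cap J)+\rho(I\cup J)$ when $I$ and $J$ are incomparable, and $\rho(I)+\rho(J)=\rho(I\cup C)+\rho(I\cup D)$ when $J\setminus I = C \sqcup D$ is disconnected, so in either case the midpoint of the segment is also the midpoint of a different pair of vertices and the segment cannot be an edge. This works, provided you record the small but essential fact that any saturated chain of $P$ joining two elements of $J\setminus I$ stays inside $J\setminus I$ (because $I$ and $J$ are ideals); this is what makes the components $C$ and $D$ order-convex, so that $I\cup C$ and $I\cup D$ really are poset ideals, and what reconciles ``connected in $P$'' with ``connected as an induced subposet.''

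For the converse your facet-counting plan is sound and again differs from the paper, which instead constructs an explicit supporting functional (weights $-\sharp(\Ac_{i_j})$ on the maximal elements and counting multiplicities below) for the reduced case $I=\emptyset$, $J=P$ connected. The facets tight along the whole segment are $x_i=0$ for $x_i$ maximal with $x_i\notin J$, $x_j=1$ for $x_j$ minimal with $x_j\in I$, and $x_i=x_j$ for covering pairs not straddling the three blocks $I$, $J\setminus I$, $P\setminus J$; their common solution set is exactly ``$x\equiv 1$ on $I$, $x\equiv 0$ off $J$, $x$ constant on $J\setminus I$,'' a line, because every component of $P\setminus J$ contains a maximal element of $P$ and every component of $I$ contains a minimal element of $P$ (again since $I$ and $J$ are ideals), while $J\setminus I$ is connected. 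Since a proper face of a polytope is the intersection of the facets containing it, the segment is an edge. However, the explicit functional you offer as the fallback, $f=\sum_{m\in J\setminus I}x_m$, does not work: $f(\rho(I))=0$ while $f(\rho(J))=\sharp(J\setminus I)$, so $f$ is not even constant on the segment, and the face on which $f$ is minimized contains $\rho(I)$ but not $\rho(J)$. Drop that sentence and carry out the facet computation (or build a genuinely constant-on-the-segment functional, as the paper does).
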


\begin{proof}
If there exists a maximal element $x_{i}$ of $P$ not belonging to
$I \cup J$, then
$\conv(\{ \rho(I), \rho(J) \})$ lies in the facet  
$x_{i} = 0$.  
If there exists a minimal element $x_{j}$ of $P$ belonging to
$I \cap J$, then $\conv(\{ \rho(I), \rho(J) \})$ 
lies in the facet  
$x_{j} = 1$.
Hence, working with induction on $d$, 
we may assume that $I \cup J = P$ and $I \cap J = \emptyset$.

Let neither $I = \emptyset$ nor $J = \emptyset$.
Then $P$ is the disjoint union
of $I$ and $J$.  Now, suppose that $\conv(\{ \rho(I), \rho(J) \})$
is an edge of $\Oc(P)$.  Then there exists a supporting hyperplane 
$\Hc$ of $\Oc(P)$ defined by the equation 
$h(x) = \sum_{i=1}^{d} a_{i}x_{i} = 1$ with each $a_{i} \in \QQ$
such that $\Hc \cap \Oc(P) = \conv(\{ \rho(I), \rho(J) \})$.
Since $\sum_{x_{i} \in I} a_{i} = \sum_{x_{j} \in J} a_{j} = 1$, one has
$\sum_{i=1}^{d} a_{i} = 2$.  In particular
$h(\rho(P)) > 1$ and $h(\emptyset) < 1$.  Thus $\Hc$ cannot be a supporting
hyperplane of $P$.  In other words, 
$\conv(\{ \rho(I), \rho(J) \})$ cannot be an edge of $P$.
Hence, if $\conv(\{ \rho(I), \rho(J) \})$ is an edge of $P$,
then either $I = \emptyset$ or $J = \emptyset$.
Let $I = \emptyset$ and $J = P$.  Suppose that
$P$ is disconnected and that $\conv(\{ \rho(\emptyset), \rho(P) \})$
is an edge of $P$.  
Again, there exists a supporting hyperplane 
$\Hc$ of $\Oc(P)$ defined by the equation 
$h(x) = \sum_{i=1}^{d} a_{i}x_{i} = 0$ with each $a_{i} \in \QQ$
such that $\Hc \cap \Oc(P) = \conv(\{ \rho(\emptyset), \rho(P) \})$. 
Let, say, $h(\rho(I)) > 0$ for those poset ideals $I$ 
with $I \neq \emptyset$ and $I \neq P$.
Since $P$ is disconnected, there exist poset ideals $I'$ and $J'$ with
$I' \cap J' = \emptyset$ and $I' \cup J' = P$.
Since $h(\rho(I')) > 0$ and $h(\rho(J')) > 0$, it follows that
$h(\rho(P)) = h(\rho(I')) + h(\rho(J')) > 0$, a contradiction.
Thus $P$ must be connected.

Conversely, suppose that $I = \emptyset$ and $J = P$ and that $P$ is connected.
Let $x_{i_{1}}, \ldots, x_{i_{q}}$ be the maximal elements of $P$
and $\Ac_{i_{j}}$ the set of those elements $y \in P$ with $y < x_{i_{j}}$.
Let $k \not\in \{ i_{1}, \ldots, i_{q} \}$.
Then we write $b_{k}$ for the number of $i_{j}$'s with $x_{k} \in \Ac_{i_{j}}$. 
Let $b_{i_{j}} = - \sharp(\Ac_{i_{j}})$.
We then claim that 
the hyperplane $\Hc$ of $\RR^{d}$ defined by the equation
$h(x) = \sum_{i=1}^{d} b_{i} x_{i} = 0$
is a supporting hyperplane of $\Oc(P)$ with
$\Hc \cap \Oc(P) = \conv(\{ \rho(\emptyset), \rho(P) \})$.
Clearly $h(\rho(P)) = h(\rho(\emptyset)) = 0$.  
Let $I$ be a poset ideal of $P$ with $I \neq \emptyset$ and $I \neq P$.
What we must prove is
$h(\rho(I)) > 0$.
To simplify the notation, suppose that
$I \cap \{ x_{i_{1}}, \ldots, x_{i_{q}} \} = \{ x_{i_{1}}, \ldots, x_{i_{r}} \}$,
where $0 \leq r < q$.  If $r = 0$, then $h(\rho(I)) > 0$.
Let $1 \leq r < q$ and $J = \cup_{j=1}^{r} (\Ac_{i_{j}} \cup x_{i_{j}})$. 
Then $J$ is a poset ideal of $P$ and $h(\rho(J)) \leq h(\rho(I))$.
We claim $h(\rho(J)) > 0$.
One has $h(\rho(J)) \geq 0$.  Moreover, $h(\rho(J)) = 0$ if and only if
no $z \in J$ belongs to $\Ac_{i_{r+1}} \cup \cdots \cup \Ac_{i_{q}}$. 
Now, since $P$ is connected, if follows that there exists 
$z \in J$ with $z \in \Ac_{i_{r+1}} \cup \cdots \cup \Ac_{i_{q}}$.
Hence $h(\rho(J)) > 0$.  Thus $h(\rho(I)) > 0$, as desired.
\end{proof}

\begin{lemma}
\label{chainedge}
Let $A$ and $B$ be antichians of $P$with $A \neq B$. 
Then $\conv(\{ \rho(A), \rho(B) \})$
forms an edge of $\Cc(P)$ if and only if 
$(A\backslash B)\cup(B\backslash A)$ 
is connected in $P$.
\end{lemma}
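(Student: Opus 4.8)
The plan is to avoid constructing a supporting hyperplane explicitly (as was done for $\op$ in Lemma~\ref{orderedge}) and instead use the standard adjacency criterion for a $0/1$ polytope: since the vertices of $\Cc(P)$ are exactly the $\rho(C)$ with $C$ an antichain (Lemma~\ref{vertex}), the segment $\conv(\{\rho(A),\rho(B)\})$ is an edge if and only if the midpoint $m=\tfrac12(\rho(A)+\rho(B))$ admits \emph{no} representation $m=\sum_k \lambda_k\,\rho(S_k)$ as a convex combination of vertices in which some antichain $S_k\notin\{A,B\}$ occurs with $\lambda_k>0$. The key structural observation is that the symmetric difference $S=(A\setminus B)\cup(B\setminus A)$ carries a \emph{bipartite} comparability graph: since $A\setminus B\subseteq A$ and $B\setminus A\subseteq B$ are each antichains, every comparable pair inside $S$ joins $A\setminus B$ to $B\setminus A$. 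Thus ``$S$ is connected'' means precisely that this bipartite graph on the parts $A\setminus B$ and $B\setminus A$ is connected, and I will exploit that a connected bipartite graph with an edge has exactly two proper $2$-colorings.

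For the ``if'' direction I suppose $S$ is connected and fix any representation $m=\sum_k\lambda_k\rho(S_k)$. A coordinatewise reading pins down each $S_k$: at a coordinate $x\in A\cap B$ one has $m_x=1$, forcing $x\in S_k$ for every $k$; at $x\notin A\cup B$ one has $m_x=0$, forcing $x\notin S_k$; hence $A\cap B\subseteq S_k\subseteq A\cup B$, i.e. $S_k=(A\cap B)\cup(S_k\cap S)$. Next take any comparable pair $x\in A\setminus B$, $y\in B\setminus A$; since $m_x=m_y=\tfrac12$, while $S_k$ being an antichain gives $[x\in S_k]+[y\in S_k]\le 1$, the weighted average $\sum_k\lambda_k([x\in S_k]+[y\in S_k])=1=\sum_k\lambda_k$ forces $[x\in S_k]+[y\in S_k]=1$ whenever $\lambda_k>0$. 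In other words the characteristic vector of $S_k\cap S$ is a proper $2$-coloring of the connected bipartite comparability graph on $S$, so $S_k\cap S\in\{A\setminus B,\,B\setminus A\}$ and therefore $S_k\in\{A,B\}$. This verifies the criterion, so $\conv(\{\rho(A),\rho(B)\})$ is an edge. The degenerate case $\sharp(S)\le 1$ is immediate, since then the inclusion $A\cap B\subseteq S_k\subseteq A\cup B$ already forces $S_k\in\{A,B\}$.

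For the ``only if'' direction I argue contrapositively: if $S=S_1\sqcup S_2$ is disconnected with $S_1,S_2\neq\emptyset$ and no comparabilities between them, I produce an alternative representation of $m$. Writing $A_0=A\cap B$ and keeping the choice of $A$ on $S_1$ but the choice of $B$ on $S_2$, set $C_1=A_0\cup((A\setminus B)\cap S_1)\cup((B\setminus A)\cap S_2)$ and let $C_2$ be the opposite swap. Each $C_i$ is an antichain: its pieces lying in $A$ and in $B$ are individually antichains, and the only new comparabilities that could arise would run between $S_1$ and $S_2$, which are excluded by hypothesis. A direct count gives $\rho(C_1)+\rho(C_2)=\rho(A)+\rho(B)$, so $m=\tfrac12\rho(C_1)+\tfrac12\rho(C_2)$; moreover $C_1\neq A,B$, because $C_1=A$ would force $S_2=\emptyset$ and $C_1=B$ would force $S_1=\emptyset$. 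Thus the midpoint has a representation using a vertex off the segment, so it is not an edge.

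I expect the main obstacle to be the ``if'' direction, and within it the two points that do the real work: squeezing each contributing antichain into the band $A\cap B\subseteq S_k\subseteq A\cup B$, and then upgrading ``at most one endpoint per comparable pair'' (the antichain condition) to ``exactly one endpoint'' (the averaging step), after which connectivity of the bipartite graph closes the argument via uniqueness of $2$-colorings. A secondary point requiring care is the antichain verification for $C_1,C_2$ in the ``only if'' direction, which is exactly where the disconnectedness hypothesis is consumed.
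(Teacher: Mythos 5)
Your proof is correct, and it takes a genuinely different route from the paper's. The paper first reduces to the case $A\cup B=P$ and $A\cap B=\emptyset$ (otherwise the segment lies in a facet $x_i=0$ or $x_j=1$) and then argues via supporting hyperplanes: for ``only if'' it derives a contradiction between disconnectedness and the existence of a supporting hyperplane cutting out the segment, and for ``if'' it constructs an explicit functional $h$ from the counts of comparabilities between $A$ and $B$ and checks $h(\rho(C))<q$ for every other antichain $C$. You instead use the midpoint criterion for edges and stay entirely combinatorial: squeezing each contributing antichain into the band $A\cap B\subseteq S_k\subseteq A\cup B$ and upgrading the antichain inequality to equality by averaging turns $S_k\cap S$ into a proper $2$-coloring of the bipartite comparability graph on the symmetric difference, so connectivity forces $S_k\in\{A,B\}$; for the converse you give the explicit swap decomposition $\rho(A)+\rho(B)=\rho(C_1)+\rho(C_2)$ with $C_1\notin\{A,B\}$. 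Your version buys uniformity (no facet reduction or induction) and avoids two points the paper treats tersely --- the claim that in a connected $P=A\sqcup B$ all comparabilities are oriented the same way, and the final strict inequality $\sum_{x_i\in A'}a_i+\sum_{x_j\in B'}b_j<q$ --- at the modest cost of invoking (and, ideally, explicitly stating) the standard fact that $\conv(\{u,v\})$ is an edge if and only if every convex representation of its midpoint by vertices is supported on $\{u,v\}$. The paper's route, in exchange, yields an explicit supporting hyperplane for the edge, which can be reused in other arguments.
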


\begin{proof}
If $A \cup B \neq P$ and if $x_{i} \not\in A \cup B$, 
then $\conv(\{ \rho(A), \rho(B) \})$ lies in the facet $x_{i} = 0$.  
Furthermore, if $A \cup B = P$ and $A \cap B \neq \emptyset$,
then $x_{j} \in A \cap B$ is isolated in $P$ and
$x_{j}$ itself is a maximal chain of $P$.  Thus $\conv(\{ \rho(A), \rho(B) \})$ 
lies in the facet $x_{j} = 1$.  Now, 
suppose that $A \cup B = P$ and $A \cap B = \emptyset$.
Then $(A\backslash B)\cup(B\backslash A) = A \cup B = P$. 

Let $\conv(\{ \rho(A), \rho(B) \})$ be an edge of $\Cc(P)$ and
$\Hc$ a supporting hyperplane of $\Cc(P)$ defined by 
$h(x) = \sum_{i=1}^{d} a_{i}x_{i} = 1$, where each $a_{i} \in \QQ$, with
$\Hc \cap \Cc(P) = \conv(\{ \rho(A), \rho(B) \})$
and $\Cc(P) \subset \Hc^{(+)}$.
If $P$ is disconnected and if $A_{1} \cup B_{1}$ and $A_{2} \cup B_{2}$
are antichains of $P$, where $A$ is the disjoint union of $A_{1} \cup A_{2}$
and $B$ is the disjoint union of $B_{1} \cup B_{2}$, 
then $h(\rho(A_{1} \cup B_{1})) < 1$ and $h(\rho(A_{2} \cup B_{2})) < 1$.  
Hence $h(\rho(A \cup B) < 2$.  
However, since $h(\rho(A)) = 1$ and $h(\rho(B)) = 1$,
one has $h(\rho(A \cup B)) = 2$, a contradiction.  Thus 
$\conv(\{ \rho(A), \rho(B) \})$ cannot be an edge of $\Cc(P)$.
Hence $P$ must be connected if 
$\conv(\{ \rho(A), \rho(B) \})$ is an edge of $\Cc(P)$.

Now, suppose that $P$ is connected.  If there exist
$x, x' \in A$ and $y, y' \in B$
with $x < y$ and $y' < x'$, then $P$ cannot be connected.
We assume $y < x$ if $x \in A$ and $y \in B$ are comparable.
For each $x_{i} \in A$ we write $a_{i}$ for the number of elements
$y \in B$ with $y < x_{i}$.  
For each $x_{j} \in B$ we write $b_{j}$ for the number of elements
$z \in A$ with $x_{j} < z$.
Clearly $\sum_{x_{i} \in A} a_{i} = \sum_{x_{j} \in B} b_{j} = q$,
where $q$ is the number of pairs $(x, y)$ with $x \in A$, $y \in B$ and $x < y$.
Let $h(x) = \sum_{x_{i} \in A} a_{i}x_{i} + \sum_{x_{j} \in B} b_{j}x_{j}$
and $\Hc$ the hyperplane of $\RR^{d}$ defined by $h(x) = d$.
Then $h(\rho(A)) = h(\rho(B)) = q$.  We claim that, for any antichain $C$ 
of $P$ with $C \neq A$ and $C \neq B$, one has $h(\rho(C)) < q$.
Let $C = A' \cup B'$ with $A' \subset A$ and $B' \subset B$.
Since $P = A \cup B$ is connected and since $C$ is an antichain of $P$, 
it follows that
$\sum_{x_{i} \in A'}a_{i} + \sum_{x_{j} \in B'}b_{j} < q$.
Thus $h(\rho(C)) < q$, as desired.
\end{proof}

Now we ask the question whether there exists a separating hyperplane 
of an order polytope as well as that of a chain polytope.

\begin{lemma}
\label{x_a=x_b}
Let $x_{i}, x_{j} \in P$ with $x_{i} \neq x_{j}$ and $\Hc_{i,j}$ the hyperplane of
$\RR^{d}$ defined by the equation $x_{i} = x_{j}$.
Then the following conditions are equivalent:
\begin{enumerate}
\item[{\rm (i)}] $\Hc_{i,j}$ is a separating hyperplane of $\Oc(P)$;
\item[{\rm (ii)}] $\Hc_{i,j}$ intersects the interior of $\Oc(P)$;
\item[{\rm (iii)}] $x_{i}$ and $x_{j}$ are incomparable in $P$.
\end{enumerate}
\end{lemma}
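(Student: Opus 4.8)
The plan is to establish the cycle (i) $\Rightarrow$ (ii) $\Rightarrow$ (iii) $\Rightarrow$ (i). The implication (i) $\Rightarrow$ (ii) is immediate, since by definition a separating hyperplane meets $\Oc(P)\setminus\partial\Oc(P)$. For (ii) $\Rightarrow$ (iii) I would argue the contrapositive: if $x_i$ and $x_j$ are comparable, say $x_i\le x_j$, then $a_i\ge a_j$ is one of the defining inequalities of $\Oc(P)$, so $\Oc(P)\subseteq\{a_i\ge a_j\}$ and every interior point satisfies $a_i>a_j$ strictly; hence $\Hc_{i,j}=\{a_i=a_j\}$ is a supporting hyperplane that misses the interior.

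The heart of the argument is (iii) $\Rightarrow$ (i), which I would split into the interior condition and the vertex condition. For the former, set $I_i=\{y\in P: y\le x_i\}$ and $I_j=\{y\in P: y\le x_j\}$; these are poset ideals, so $\rho(I_i)$ and $\rho(I_j)$ are vertices of $\Oc(P)$ by Lemma~\ref{vertex}. Incomparability forces $x_j\notin I_i$ and $x_i\notin I_j$, so $a_i-a_j$ equals $+1$ at $\rho(I_i)$ and $-1$ at $\rho(I_j)$: the two vertices lie strictly on opposite open sides of $\Hc_{i,j}$. Now take any interior point $u$ of $\Oc(P)$ (it exists because $\dim\Oc(P)=d$). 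If $u\in\Hc_{i,j}$ we are done; otherwise join $u$ to whichever of $\rho(I_i),\rho(I_j)$ lies on the opposite side, along which $a_i-a_j$ changes sign and therefore vanishes at some $v$. Since every point of this segment other than its vertex endpoint is interior to $\Oc(P)$, and $v$ is not that endpoint, $v$ is an interior point on $\Hc_{i,j}$.

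For the vertex condition I would invoke the edge description of Lemma~\ref{orderedge}: every edge of $\Oc(P)$ is $\conv(\{\rho(I),\rho(J)\})$ with $I\subsetneq J$, and the point $s\rho(J)+(1-s)\rho(I)$ of this edge has $k$-th coordinate $1$ if $k\in I$, $s$ if $k\in J\setminus I$, and $0$ if $k\notin J$. Reading off coordinates $i$ and $j$ in the finitely many membership cases shows that $a_i=a_j$ holds either for all $s$ (when $i$ and $j$ lie in the same block among $I$, $J\setminus I$, $P\setminus J$) or only at $s=0$ or $s=1$; it is never satisfied at a single interior value of $s$. Hence no edge meets $\Hc_{i,j}$ in a single relative-interior point, so cutting along $\Hc_{i,j}$ produces no vertex that is not already a vertex of $\Oc(P)$. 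Consequently both $\Oc(P)\cap\Hc_{i,j}^{(\pm)}$ have all their vertices among the (integral) vertices of $\Oc(P)$ and, since $\Hc_{i,j}$ meets the interior, are full-dimensional; by the criterion recalled in the introduction they are integral polytopes of dimension $d$, so $\Hc_{i,j}$ is a separating hyperplane. This gives (i).

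I expect the interior-point step to be the most delicate, resting on the convex-geometric fact that the half-open segment from an interior point to any other point of $\Oc(P)$ lies in the interior; the vertex condition, by contrast, collapses—thanks to the nestedness $I\subsetneq J$ guaranteed by Lemma~\ref{orderedge}—to a short finite check on the three possible coordinate values $0$, $s$, $1$.
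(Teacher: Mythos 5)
Your proof is correct and follows essentially the same route as the paper: the cycle (i)$\Rightarrow$(ii)$\Rightarrow$(iii)$\Rightarrow$(i), the principal ideals $\{y\le x_i\}$ and $\{y\le x_j\}$ as the witnesses straddling $\Hc_{i,j}$, and Lemma~\ref{orderedge} to rule out new vertices. The only differences are cosmetic --- you prove (ii)$\Rightarrow$(iii) by the contrapositive (the hyperplane is supporting when $x_i,x_j$ are comparable) and check the edge condition by parametrizing each edge directly, whereas the paper observes that two ideals whose images lie strictly on opposite sides cannot be nested and hence cannot span an edge; both arguments rest on the same facts.
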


\begin{proof}
The implication (i) $\Rightarrow$ (ii) is obvious. 
Suppose (ii).  Then there exist poset ideals $I$ and $J$ of $P$
with $\rho(I) \in \Hc_{i,j}^{(+)} \setminus \Hc_{i,j}$ and
$\rho(J) \in \Hc_{i,j}^{(-)} \setminus \Hc_{i,j}$.
In other words, there exist poset ideals $I$ and $J$ of $P$ 
with $x_{i}\in I \setminus J$ and $x_{j} \in J \setminus I$.
Thus in particular $x_{i}$ and $x_{j}$ are incomparable in $P$.
Hence (ii) $\Rightarrow$ (iii) follows.

Suppose (iii).  Let $I$ be the poset ideal of $P$
consisting of those $y \in P$ with $y \leq x_{i}$
and $J$ the poset ideal of $P$ 
consisting of those $y \in P$ with $y \leq x_{j}$.
Since $x_{i}$ and $x_{j}$ are incomparable in $P$,
it follows that $x_{i} \not\in J$ and
$x_{j} \not\in I$.  Thus
$\rho(I) \in \Hc_{i,j}^{(+)} \setminus \Hc_{i,j}$ and
$\rho(J) \in \Hc_{i,j}^{(-)} \setminus \Hc_{i,j}$.
Hence $\Hc_{i,j}$ intersects the interior of $\Oc(P)$.
Let, in general, $I'$ and $J'$ be poset ideals of $P$
with $\rho(I') \in \Hc_{i,j}^{(+)} \setminus \Hc$ and
$\rho(J') \in \Hc_{i,j}^{(-)} \setminus \Hc$.
In other words, $x_{i} \in I \setminus J$ and $x_{j} \in J \setminus I$.
Hence $I \not\subset J$ and $J \not\subset I$.
Lemma \ref{orderedge} then guarantees that
$\conv(\{\rho(I), \rho(J)\})$ cannot be an edge of $\Oc(P)$.
Hence
$\Hc_{i,j}$ is a separating hyperplane of
$\Oc(P)$, as desired.
\end{proof}

\begin{lemma}
\label{sephyperchain}
Let $\Hc$ be the hyperplane of $\RR^{d}$ defined by the equation
$\sum_{i=1}^{d} x_{i} - 1 = 0$.
Then the following conditions are equivalent:
\begin{enumerate}
\item[{\rm (i)}] $\Hc$ is a separating hyperplane of $\Cc(P)$;
\item[{\rm (ii)}] $\Hc$ intersects the interior of $\Cc(P)$;
\item[{\rm (iii)}] $P$ is not a chain.
\end{enumerate}
\end{lemma}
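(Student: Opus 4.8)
The plan is to follow the template of Lemma \ref{x_a=x_b} and prove the cycle (i) $\Rightarrow$ (ii) $\Rightarrow$ (iii) $\Rightarrow$ (i). The implication (i) $\Rightarrow$ (ii) is immediate, since by definition a separating hyperplane must satisfy $\Hc \cap (\Cc(P) \setminus \partial \Cc(P)) \neq \emptyset$. For (ii) $\Rightarrow$ (iii) I would argue by contraposition. If $P$ is a chain $x_{1} < \cdots < x_{d}$, then its unique maximal chain is all of $P$, so the facet description gives $\Cc(P) = \{(a_{1},\dots,a_{d}) : a_{i} \geq 0,\ \sum_{i} a_{i} \leq 1\}$, the standard $d$-simplex, and $\Hc$ is precisely its facet $\sum_{i} x_{i} = 1$. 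Since the interior of the simplex is $\{a_{i} > 0,\ \sum_{i} a_{i} < 1\}$, it is disjoint from $\Hc$, so (ii) fails; this establishes (ii) $\Rightarrow$ (iii).

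The substantive implication is (iii) $\Rightarrow$ (i). The organizing observation is that, by Lemma \ref{vertex}, every vertex of $\Cc(P)$ has the form $\rho(A)$ for an antichain $A$, and there $\sum_{i}(\rho(A))_{i} = \sharp(A)$, an integer. I would first show that $\Hc$ meets the interior whenever $P$ is not a chain: since no chain of $P$ can then contain all $d$ elements, every maximal chain has at most $d-1$ elements, so the point $(1/d,\dots,1/d)$ has all coordinates positive, satisfies every maximal-chain inequality strictly (each such sum is at most $(d-1)/d < 1$), and lies on $\Hc$. Hence $(1/d,\dots,1/d) \in \Hc \cap (\Cc(P)\setminus\partial\Cc(P))$, giving (ii).

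It remains to verify the cutting condition, namely that no edge of $\Cc(P)$ has its two endpoints strictly on opposite sides of $\Hc$. Because $\sharp(A)$ is an integer, a vertex $\rho(A)$ lies strictly in $\Hc^{(-)}$ exactly when $A = \emptyset$, strictly in $\Hc^{(+)}$ exactly when $\sharp(A) \geq 2$, and on $\Hc$ exactly when $\sharp(A) = 1$. Thus the only candidate for a properly cut edge is $\conv(\{\rho(\emptyset), \rho(B)\})$ with $\sharp(B) \geq 2$. By Lemma \ref{chainedge} this segment is an edge of $\Cc(P)$ if and only if $(\emptyset \setminus B) \cup (B \setminus \emptyset) = B$ is connected in $P$; but after the reduction to the subposet supported on $B$ (as in the proof of that lemma), $B$ is an antichain carrying no comparability relations, hence disconnected whenever $\sharp(B) \geq 2$. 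So no edge is properly cut, and together with the interior point this shows $\Hc$ is a separating hyperplane, completing (iii) $\Rightarrow$ (i). The crux of the whole argument is precisely this edge analysis: reducing the possible crossings to the single family $\conv(\{\rho(\emptyset), \rho(B)\})$ and then invoking Lemma \ref{chainedge} to eliminate them via the disconnectedness of antichains of size at least two. The interior computation and the chain case are routine by comparison.
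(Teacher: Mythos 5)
Your proof is correct and follows essentially the same route as the paper: the decisive step in both is observing that $\rho(\emptyset)$ is the only vertex strictly below $\Hc$, so the only candidate crossing edges are $\conv(\{\rho(\emptyset),\rho(B)\})$ with $\sharp(B)\ge 2$, which Lemma \ref{chainedge} rules out because an antichain of size at least two is disconnected in $P$. Your minor variations (the contrapositive simplex argument for (ii) $\Rightarrow$ (iii) and the explicit interior point $(1/d,\dots,1/d)$) are valid but do not change the substance of the argument.
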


\begin{proof}
The implication (i) $\Rightarrow$ (ii) is obvious. 
Suppose (ii).  Since the origin $\rho(\emptyset)$
of $\RR^{d}$ belongs to $\Hc^{(-)} \setminus \Hc$, 
there is an antichain $A$ of $P$ with
$\rho(A) \in \Hc^{(+)} \setminus \Hc$.
Then $\sharp(A) \geq 2$.  Thus $P$ cannot be a chain. 
Hence (ii) $\Rightarrow$ (iii) follows.

Suppose (iii).  One has an antichain $A$ of $P$ with
$\sharp(A) \geq 2$.  Then $\rho(A) \in \Hc^{(+)} \setminus \Hc$
and $\rho(\emptyset) \in \Hc^{(-)} \setminus \Hc$.
Hence $\Hc$ intersects the interior of $\Cc(P)$.
Clearly $\rho(\emptyset)$ is a unique vertex of $\Cc(P)$ belonging to 
$\Hc^{(-)} \setminus \Hc$.  Let $B$ be an antichain of $P$
with $\rho(B) \in \Hc^{(+)} \setminus \Hc$.
Thus $\sharp(B) \geq 2$.  
Since $B = (\emptyset \setminus B) \cup (B \setminus \emptyset)$
is disconnected in $P$, Lemma \ref{chainedge} says that
$\conv(\{\rho(\emptyset),\rho(B)\})$ cannot be an edge.
Hence $\Hc$ is a separating hyperplane of $\Cc(P)$, as desired.
\end{proof}

By virtue of Lemmata \ref{x_a=x_b} and \ref{sephyperchain}, 
it follows immediately that

\begin{theorem}
\label{SepHypOC}
Let $P$ be a finite poset, but not a chain. 
Then each of the order polytope $\Oc(P)$ and the chain polytope $\Cc(P)$
possesses a separating hyperplane. 
\end{theorem}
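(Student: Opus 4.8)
The plan is to derive the statement directly from Lemmata \ref{x_a=x_b} and \ref{sephyperchain}, which between them reduce the existence question for both polytopes to a purely combinatorial property of $P$. The only preliminary observation I would record is the standard fact that a finite poset $P$ is a chain precisely when every pair of distinct elements is comparable; equivalently, $P$ fails to be a chain exactly when there exist two incomparable elements $x_i, x_j \in P$ with $x_i \neq x_j$.

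For the order polytope $\Oc(P)$, I would use this observation to select such an incomparable pair $x_i \neq x_j$, which is guaranteed by the hypothesis that $P$ is not a chain. This is exactly condition (iii) of Lemma \ref{x_a=x_b} for that pair, so the implication (iii) $\Rightarrow$ (i) supplied by that lemma shows that the hyperplane $\Hc_{i,j}$ defined by $x_i = x_j$ is a separating hyperplane of $\Oc(P)$.

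For the chain polytope $\Cc(P)$ the argument is even more immediate: the hypothesis that $P$ is not a chain is verbatim condition (iii) of Lemma \ref{sephyperchain}, so the implication (iii) $\Rightarrow$ (i) of that lemma yields directly that the hyperplane defined by $\sum_{i=1}^{d} x_i = 1$ is a separating hyperplane of $\Cc(P)$.

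Since both lemmas have already been established, I do not expect any genuine obstacle to remain; the entire substance of the theorem has been front-loaded into the two equivalences, and the theorem is essentially their corollary. The only step requiring even minimal care is the translation of the hypothesis ``$P$ is not a chain'' into the existence of an incomparable pair, which is what feeds Lemma \ref{x_a=x_b}; Lemma \ref{sephyperchain}, by contrast, already accepts the hypothesis in precisely the stated form, so no reformulation is needed there.
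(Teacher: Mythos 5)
Your proposal is correct and matches the paper's own argument exactly: the paper derives the theorem as an immediate consequence of Lemmata \ref{x_a=x_b} and \ref{sephyperchain}, using the incomparable pair guaranteed by ``$P$ is not a chain'' for the order polytope and the hypothesis verbatim for the chain polytope. No difference in approach and no gap.
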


\subsection{Description of seperating hyperplanes for order and chain polytopes}

In this subsection, we study the necessary and sufficient conditions such that the following hyperplane 
$$\Hc:\, h(x)=c_1x_1+c_2x_2+\cdots+c_dx_d=0$$
becomes a seperating hyperplane for a centain $d$-element poset $\Pc$. This study can be very difficult for general posets. 
 Therefore, we focus on the following three basic posets: disjoint chains; binary trees (assume connected); and zigzag posets (assume connected).
 Notice that there are no ``X'' shape in all of the three classes of posets, therefore their chain polytopes and order polytopes are unimodular 
equivalent (\cite{HL}). In this subsection, we will focus on order polytopes, and all results are also true for chain polytopes.

First, by the definitions of seperating hyperplanes, together with Lemma \ref{vertex} and Lemma \ref{orderedge} about the descriptions of the vertices and 
edges for order polytopes, we have the following description.
\begin{lemma}\label{checkcut}
$\Hc$ is a seperating hyperplane for $\Oc(P)$ if and only if the following two properties are satisfied:
\begin{enumerate}
 \item \label{nontrivial}there exist two poset ideals $I$ and $J$ such that $h(\rho(I))>0$ and $h(\rho(J))<0$ (getting two nontrivial subpolytopes);
 \item \label{nobad}$h(\rho(I))h(\rho(J))\ge 0$, for each pair of poset ideals $I$ and $J$ such that $(I\backslash J) \cup (J\backslash I)$ is
 connected in $P$. 
\end{enumerate}
\end{lemma}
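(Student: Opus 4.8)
The plan is to translate the defining conditions of a separating hyperplane directly into the language of vertices and edges of $\Oc(P)$, using the two structural results already established: Lemma \ref{vertex}, which says every vertex of $\Oc(P)$ is $\rho(I)$ for a poset ideal $I$, and Lemma \ref{orderedge}, which characterizes when $\conv(\{\rho(I),\rho(J)\})$ is an edge. The key observation is that for a hyperplane $\Hc$ through the origin defined by $h(x)=0$, the two closed half-spaces are $\Hc^{(+)}=\{h\ge 0\}$ and $\Hc^{(-)}=\{h\le 0\}$, and the sign of $h(\rho(I))$ records on which side a vertex lies. Thus the entire geometric definition of ``$\Hc$ cuts $\Oc(P)$'' should reduce to sign conditions on the values $h(\rho(I))$.

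First I would unwind the definition of a cut given in the introduction: $\Hc$ is separating if and only if (a) $\Hc$ meets the interior of $\Oc(P)$, i.e.\ $\Hc\cap(\Oc(P)\setminus\partial\Oc(P))\neq\emptyset$, and (b) for each edge $e=\conv(\{v,v'\})$ one has $\Hc\cap e\subset\{v,v'\}$. For condition (a), since $\Oc(P)$ is full-dimensional and $\Hc$ passes through the origin (a vertex), $\Hc$ meets the interior precisely when there are vertices strictly on both sides; since vertices are exactly the $\rho(I)$, this is equivalent to the existence of poset ideals $I,J$ with $h(\rho(I))>0$ and $h(\rho(J))<0$. This gives property \eqref{nontrivial}. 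I expect this direction to be routine, the only subtlety being to justify that strict separation of \emph{vertices} is equivalent to meeting the interior, which follows because any interior point is a convex combination of vertices and $h$ is linear.

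Next I would handle condition (b). The content is that $\Hc$ must not cut through the relative interior of any edge. For an edge $\conv(\{\rho(I),\rho(J)\})$, the hyperplane $\Hc=\{h=0\}$ passes through the relative interior exactly when $h(\rho(I))$ and $h(\rho(J))$ have \emph{strictly opposite} signs, i.e.\ $h(\rho(I))h(\rho(J))<0$; if the product is $\ge 0$ then $\Hc$ meets the edge only (possibly) at an endpoint. By Lemma \ref{orderedge}, the pairs $(I,J)$ giving edges are exactly those with $(I\setminus J)\cup(J\setminus I)$ connected in $P$ (after accounting for the containment condition, which is subsumed once one ranges over all ordered pairs). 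So forbidding a cut through any edge interior is precisely property \eqref{nobad}: $h(\rho(I))h(\rho(J))\ge 0$ for every such connected pair. Assembling (a) and (b) yields the stated equivalence.

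The main obstacle I anticipate is the careful bookkeeping in condition (b): I must confirm that ``$\Hc\cap e\subset\{v,v'\}$'' is genuinely equivalent to the sign condition $h(\rho(I))h(\rho(J))\ge 0$, and in particular that the edge being \emph{contained} in $\Hc$ (when both endpoints satisfy $h=0$) causes no trouble. If both $h(\rho(I))=0$ and $h(\rho(J))=0$, then the whole edge lies in $\Hc$, and one must check this is compatible with $\Hc$ being separating — indeed it is, since the definition only forbids $\Hc$ meeting an edge in a point of its relative interior that is not a vertex, and here every point of $e$ lies in $\Hc$ while the endpoints remain vertices of both $\Oc(P)\cap\Hc^{(\pm)}$. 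Hence the product-nonnegativity condition correctly captures the edge requirement, and the two properties together are exactly equivalent to $\Hc$ being a separating hyperplane.
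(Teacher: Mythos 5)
Your overall strategy is exactly what the paper intends: the paper states Lemma \ref{checkcut} without proof, as an immediate consequence of the definition of a cut together with Lemma \ref{vertex} (vertices are the $\rho(I)$) and Lemma \ref{orderedge} (edge description), and your translation of ``meets the interior'' into condition (1) and ``no edge is crossed in its relative interior'' into condition (2) is the right reading. Your treatment of the degenerate case where an entire edge lies in $\Hc$ is also correct and consistent with how the paper handles the analogous case for the cube in Lemma \ref{hyone}.

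There is, however, one point where your justification is wrong even though the conclusion is right. Condition (2) ranges over all pairs of ideals with $(I\setminus J)\cup(J\setminus I)$ connected, whereas Lemma \ref{orderedge} characterizes edges by the \emph{stronger} condition that $I\subset J$ \emph{and} $J\setminus I$ is connected. You wave this away by saying the containment condition ``is subsumed once one ranges over all ordered pairs,'' but passing to ordered pairs only resolves which of $I\subset J$ or $J\subset I$ holds; it does not explain why containment holds at all. If there were a pair with connected symmetric difference and neither containment, condition (2) would impose a constraint on a non-edge pair and the ``only if'' direction of the lemma could fail. The missing observation is this: if $x\in I\setminus J$ and $y\in J\setminus I$, then $x$ and $y$ are incomparable (e.g.\ $x<y$ together with $y\in J$ would force $x\in J$ since $J$ is an ideal). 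Hence no Hasse edge of $P$ joins $I\setminus J$ to $J\setminus I$, so if both are nonempty their union is disconnected. Therefore connectedness of $(I\setminus J)\cup(J\setminus I)$ forces one of the two differences to be empty, i.e.\ $I\subset J$ or $J\subset I$, and the class of pairs in condition (2) coincides exactly with the class of edge pairs of Lemma \ref{orderedge}. (The paper proves a special case of this observation by hand for zigzag posets in Proposition \ref{zigzag}, but the incomparability argument gives it for every poset.) With that one line inserted, your proof is complete.
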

We call a pair of poset ideals $I$ and $J$ that does not satisfy the second property in Lemma \ref{checkcut} a {\em bad pair} for $h$, i.e., $h(\rho(I))h(\rho(J))<0$ 
and $(I\backslash J) \cup (J\backslash I)$ is connected in $P$. 
In the rest of this subsection, we will prove most necessary conditions for being a seperating hyperplane by constructing bad pairs. We are 
looking for posets which have the following property.

Consider the following three properties of the hyperplane $\Hc$. 
\begin{property}\label{description}
Given a poset $P$, the following form the necessary and sufficient conditions for $\Hc$ to be a seperating hyperplane for $\Oc(P)$.
 \begin{enumerate}
 \item \label{min}There exist two minimal elements $i$ and $j$ such that $c_i>0$ and $c_j<0$;
 \item \label{eqab}non zero coefficients all have the same absolute value, i.e., $c_i\in\{0,1,-1\}$ after rescaling, for all $i=1,2,\dots,d$;
 \item \label{unique}coefficients for minimal elements uniquely determine the other coefficients. Here we always try to aviod having zero 
 coefficients.
\end{enumerate}
\end{property}

Notice that once Property \ref{description} is true for some poset $P$,  we can easily check whether a hyperplane is a 
seperating hyperplane for $\Oc(P)$. Moreover, the total number of seperating hyperplane
will be $2^{\#\{\text{of min elements in }P\}}$. Among the  three classes of posets we mentioned: 
disjoint chains, connected binary trees and connected 
zigzag posets, only disjoint chains satisfy Property \ref{description}. We will provide counter examples for 
the other two posets and give the best possible results under certain conditions.
\begin{proposition}\label{chain}
Property \ref{description} is true for  disjoint chains.
\end{proposition}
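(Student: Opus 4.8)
The plan is to translate the separating-hyperplane condition for $\Oc(P)$, where $P=C_1\sqcup\cdots\sqcup C_m$ is a disjoint union of chains $C_k:\,x_{k,1}<\cdots<x_{k,\ell_k}$ with coefficients $c_{k,1},\dots,c_{k,\ell_k}$, into a purely numerical condition on partial sums, and then read off Property \ref{description}. A poset ideal of $P$ is determined by a tuple $(i_1,\dots,i_m)$ with $0\le i_k\le\ell_k$, namely the ideal $I$ whose trace on $C_k$ is the initial segment $\{x_{k,1},\dots,x_{k,i_k}\}$; writing $S_{k,p}=c_{k,1}+\cdots+c_{k,p}$ (so $S_{k,0}=0$) one has $h(\rho(I))=\sum_{k}S_{k,i_k}$. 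By Lemma \ref{orderedge}, two ideals span an edge exactly when they agree on every chain but one and differ on that chain $C_k$ by extending its initial segment: the symmetric difference is then a contiguous block of a single chain, hence connected, whereas a symmetric difference meeting two chains is disconnected. First I would record this dictionary and then invoke Lemma \ref{checkcut}: $\Hc$ is separating iff (1) $h$ takes both a strictly positive and a strictly negative value on ideals, and (2) for every chain $k$, every ``base'' $B=\sum_{k'\ne k}S_{k',i_{k'}}$ obtainable from the other chains, and all $0\le p<p'\le\ell_k$, one has $(B+S_{k,p})(B+S_{k,p'})\ge0$; equivalently the numbers $B+S_{k,0},\dots,B+S_{k,\ell_k}$ never straddle $0$.

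For sufficiency, assume the coefficients are normalized as in Property \ref{description}: after rescaling they lie in $\{0,\pm1\}$, and down each chain they are the unique zero-free extension of the sign of its minimal coefficient, i.e.\ they alternate so that the partial sums $S_{k,p}$ stay in $\{0,\eps_k\}$ with $\eps_k=\sgn(c_{k,1})\in\{\pm1\}$. Then every base $B$ is an integer and $B+S_{k,p}$ ranges in $\{B,B+1\}$ or $\{B,B-1\}$; such a pair straddles $0$ only if $B$ lies strictly between $-1$ and $0$ or between $0$ and $1$, impossible for an integer, so (2) holds. Since some minimal element has coefficient $+1$ and some has $-1$, the singleton ideal at the positive minimal element gives $h>0$ and the one at the negative minimal element gives $h<0$, so (1) holds and $\Hc$ separates.

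The substantive direction is necessity, and the crux is upgrading ``same sign along each chain'' to ``partial sums confined to $\{0,\pm V\}$''. Taking $B=0$ in (2) forces $S_{k,0},\dots,S_{k,\ell_k}$ to be all $\ge0$ or all $\le0$, so each chain is \emph{positive} or \emph{negative}, and by (1) both kinds occur. For a positive chain $k$ and a negative chain $k'$, set $V_k=\max_pS_{k,p}>0$ and $W_{k'}=-\min_pS_{k',p}>0$; using $B=V_k$ against chain $k'$ forces $V_k\notin(0,W_{k'})$, so $V_k\ge W_{k'}$, and the symmetric choice gives $W_{k'}\ge V_k$, whence all these extrema equal a common value $V$. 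Feeding the partial sums of a positive chain as bases into a negative chain (whose straddle interval is $(0,V)$) then forces every $S_{k,p}\notin(0,V)$; with $0\le S_{k,p}\le V$ this gives $S_{k,p}\in\{0,V\}$, and symmetrically on negative chains. Hence each $c_{k,p}=S_{k,p}-S_{k,p-1}\in\{0,\pm V\}$, which after dividing by $V$ is Property \ref{description}(2); imposing zero-avoidance makes the partial sums alternate strictly between $0$ and $\pm V$, so $\sgn(c_{k,1})$ determines the whole chain (Property \ref{description}(3)) and the minimal elements of the positive and negative chains supply coefficients of both signs (Property \ref{description}(1)).

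The main obstacle I anticipate is exactly this cross-chain straddling argument (forcing the partial sums into $\{0,\pm V\}$, rather than merely making them same-signed), together with pinning down Property \ref{description}(3). A separating hyperplane may genuinely carry zero coefficients, for instance $x_{a_1}-x_{b_1}=0$ on $\{a_1<a_2\}\sqcup\{b_1\}$, so (3) must be read as a normalization: one shows that every separating hyperplane, after clearing the common factor $V$ and collapsing its zero coefficients, agrees with a unique zero-free alternating representative of the stated form, and that conversely every such representative separates. Carrying out this normalization cleanly, as opposed to the routine partial-sum bookkeeping, is where the care is needed.
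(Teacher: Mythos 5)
Your proposal is correct and follows essentially the same route as the paper: both reduce to Lemma \ref{checkcut}, observe that a connected symmetric difference of ideals in a disjoint union of chains is a segment of a single chain, and obtain the coefficient constraints by constructing cross-chain bad pairs. Your partial-sum bookkeeping (confining each chain's partial sums to $\{0,\pm V\}$ via extremal bases) is a more systematic packaging of the paper's element-by-element propagation up each chain, and your reading of condition (3) as a zero-avoiding normalization matches the paper's own caveat.
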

\begin{proof}We first prove that all three conditions listed in Property \ref{description} are necessary 
for $\Hc$ to be a seperating hyperplane.
 \begin{enumerate}
  \item By Lemma \ref{checkcut} (\ref{nontrivial}), there exists one order ideal $I$ of $\Pc$, such that $h(\rho(I))>0$. 
  We assume $I$ is connected, otherwise we look at the chain decomposition of $P=C_1\cup\cdots\cup C_r$ and consider $I\cap C_i$,
  for $i=1,\dots,r$. At least one of the intersections is nonempty and satisfies $h(\rho(I\cap C_{i}))》>0$. Now back to the case when 
  $I$ is connected. Since $I$ is a chain, there exists a unique minimal element $i$ in $I$.
  We claim that $c_i\ge0$, where $c_i$ is the coefficient of $x_i$ in $\Hc$. In fact, if $c_i<0$, $I$ and $J=\{i\}$ is a bad pair. 
  Actually, here we can assume $c_i>0$, since in the case $c_i=0$, we can simplely throw this element away from the poset 
  and look at the new minimal element in the subposet $\Pc\backslash\{i\}$. Since the whole $I$ can not have all zero coefficients,
  we will just assume $c_i\neq 0$. Similarly, we also have another minimal element $j$ with $c_j<0$.
  \item We first prove that nonzero coefficients of the minimal elements need to have the same absolute value. For example, consider
the following poset.
$$ \xy 0;/r.17pc/: (0,0)*{\circ}="a";
 (0,10)*{\circ}="b";
 (0,20)*{\circ}="c";
 (0,30)*{\circ}="d";
 (20,0)*{\circ}="e";
 (20,10)*{\circ}="f";
 (20,20)*{\circ}="g";
 (-5,0)*{+a};
 (-5,10)*{-d};
 (-5,20)*{+e};
 (15,0)*{-b};
 (15,10)*{+g};
 "a"; "b"**\dir{-};
 "b"; "c"**\dir{-};
 "c"; "d"**\dir{-};
 "e"; "f"**\dir{-};
 "f"; "g"**\dir{-};
\endxy$$

Without lose of generality, pick $c_a>0$, $c_b<0$. Suppose $|c_b|>|c_a|$.
Let $I=\{a\}$, $J=\{b,a\}$. Then $(I,J)$ is a bad pair. So we need $|c_b|=|c_a|$. Consider all pairs of minimal elements with
opposite signs, we have all their coefficients have the same absolute value.

Now consider the pair $I=\{a\}$, $J=\{a,d\}$, in order to make $(I,J)$ not bad, we need $c_d\ge -c_a=c_b$. Consider the pair
$I=\{b\}$, $J=\{b,a,d\}$, we have $c_d\le 0$. Then consider the pair
$I=\{a,d\}$, $J=\{b,a,d\}$, since we want to avoid zero coefficient, assume $c_d\neq 0$, therefore we have
$c_d\le -c_a$. Therefore, we need $c_d=-c_a$. For the same reason, 
we have $c_g=c_a$.
Now consider $c_e$. Similar as above, the pair $(\{a\},\{a,d,e\})$, $(\{b\},\{b,a,d,e\})$ and $(\{a,d,e\},\{b,a,d,e\})$ implies $c_e=c_a$.
Keep going up
this way, we can show that the signs along each chain need to alternate and their coefficients have the same absolute value. 
\item We have  just shown in the previous part that given the coefficients of the minimal elements, there exists 
a unique way to extend the coefficients
to other elements (assume avoiding zero coefficients), which is exactly Property \ref{description} (\ref{unique}). 
 \end{enumerate}
Now we want to show that if a hyperplane $\Hc$ satisfies the three conditions listed in Property \ref{description}, then $\Hc$ 
is a seperating hyperplane. Condition (\ref{min}) guarantees part (\ref{nontrivial}) in Lemma \ref{checkcut}. Now we want to show 
that there is no bad pair. For any pair of poset ideals $(I,J)$, if $J\backslash I$ is connected, then $J\backslash I$ is a 
segment in a chain. By the necessary conditions on the coefficients of $\Hc$, $\sum_{i\in J\backslash I}c_i\in\{-1,0,1\}$.
As a result, no matter what is the value of $h(v_I)$, we always have $h(v_I)h(v_J)\ge 0$.

\end{proof}

\begin{proposition}\label{tree}
 For the binary trees, the following are true:
 \begin{enumerate}
  \item Property \ref{description} (\ref{min}) is necessary.
  \item Property \ref{description} (\ref{eqab}) is not necessary.
  \item Assume a separating hyperplane $\Hc$ satisfying Property \ref{description} (\ref{min} and \ref{eqab}), then (\ref{unique}) 
  is also necessary.
 \end{enumerate}
 However, all three conditions in Property \ref{description} together are not sufficient for a hyperplane to be a seperating hyperplane.
\end{proposition}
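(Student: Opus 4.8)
The plan is to run all four assertions through the single criterion of Lemma~\ref{checkcut}: $\Hc$ is separating for $\Oc(P)$ iff some ideal has $h(\rho(\cdot))>0$, some has $h(\rho(\cdot))<0$, and there is no \emph{bad pair}, i.e.\ no ideals $I,J$ with $(I\setminus J)\cup(J\setminus I)$ connected in $P$ and $h(\rho(I))h(\rho(J))<0$. The structural fact I would isolate first and reuse everywhere is that for a tree the \emph{connected} poset ideals are exactly the principal ideals $D_v=\{u:u\le v\}$ (the subtree below a node $v$): a connected down-set cannot have two incomparable maximal elements, since their subtrees meet only through an absent common ancestor, so it has a unique top $v$ and equals $D_v$. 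Hence the decisive edges, in the sense of Lemma~\ref{orderedge}, are $D_{v_i}\subset D_v$ for each child $v_i$ of $v$ (the difference being the remaining subtrees together with $v$, which is connected) and the singleton edges $\{m\}\subset D_v$ for minimal $m\le v$. A second elementary input is \emph{rigidity of minimal coefficients}: for minimal $m,m'$ the sets $\{m\},\{m'\},\{m,m'\}$ are all ideals and $\{m\}\subset\{m,m'\}\supset\{m'\}$ are edges, so if $c_m>0>c_{m'}$ then $c_m(c_m+c_{m'})\ge 0$ and $c_{m'}(c_m+c_{m'})\ge 0$ force $c_m=-c_{m'}$; thus among minimal elements Property~\ref{description}(\ref{eqab}) always holds and any violation of it must occur at a non-minimal node.

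For assertion (1) [Property~\ref{description}(\ref{min}) is necessary] I would start from an ideal with $h(\rho(I))>0$, pass to a connected component (still an ideal, still positive by additivity), hence to a principal ideal $D_v$ with $h(\rho(D_v))>0$ chosen with $v$ as low as possible. If $v$ is minimal we are done. Otherwise the singleton edges $\{m\}\subset D_v$ force $c_m\ge 0$ for every minimal $m\le v$, and under the standing avoid-zero convention this produces a minimal element with $c_m>0$; the negative ideal yields a minimal element with $c_m<0$ symmetrically. The delicate point is exactly the zero-coefficient bookkeeping, which is why the statement is phrased with the avoid-zero convention.

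For assertion (3) [Property~\ref{description}(\ref{unique}), assuming (\ref{min}) and (\ref{eqab})] I would argue by induction up the tree, mirroring the alternation argument of Proposition~\ref{chain}. With the minimal coefficients fixed to $\pm1$, the child-to-parent edges $D_{v_i}\subset D_v$, together with the singleton edges and the \emph{pin} ideals used in the chain proof (a principal subtree together with one opposite-sign minimal element), force the sign of $h(\rho(D_v))$, and this in turn forces $c_v$ to a single admissible value once the coefficients below $v$ are determined. The only real work is checking that these local constraints at $v$ leave exactly one choice of $c_v$.

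Assertions (2) and (4) are the negative ones, and I would settle each with an explicit small binary tree, which I expect to be the main obstacle, since the edge relation is governed by connectedness of set differences and so ruling out or producing a bad pair is a genuinely global condition. For (2) [(\ref{eqab}) not necessary] the goal, given the rigidity remark above, is to place a non-minimal node $v$ so that the pin ideals that would otherwise force $|c_v|=1$ are unavailable; I would then assign leaf signs of both kinds (respecting (\ref{min}) and rigidity), solve the sign-consistency inequalities coming from the edges incident to $v$ for the internal coefficients, and finally verify through Lemma~\ref{checkcut} that the chosen vector has no bad pair anywhere. For (4) [the three conditions together are not sufficient] I would take the coefficient vector produced by the unique propagation of (3) on a carefully chosen tree, which by construction satisfies (\ref{min}), (\ref{eqab}) and (\ref{unique}), and then exhibit two ideals whose symmetric difference is connected and whose $h$-values are strictly opposite -- a long-range bad pair that the purely local parent-child propagation never detects. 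In both constructions the engineering problem is the same: arrange the example so that a distant set difference remains connected while every local minimal-element and parent-child constraint is simultaneously satisfied.
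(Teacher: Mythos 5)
Your framework is the right one and, for the two positive assertions, essentially the paper's: assertion (1) rests on the observation that a connected ideal of a tree is principal and that deleting a minimal element from it leaves it connected (so the singleton--ideal pairs force the sign constraints), and assertion (3) is proved in the paper exactly by the local parent-from-children propagation you sketch (the paper carries it out by enumerating six admissible local sign patterns and exhibiting, for each, the pin ideals that force the parent's coefficient). Your rigidity remark for minimal coefficients via the pairs $\{m\}\subset\{m,m'\}$ is also the paper's argument, inherited from Proposition \ref{chain}. So for (1) and (3) you are on the paper's route, modulo actually carrying out the six-case check in (3) and the zero-coefficient bookkeeping you flag in (1).

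The genuine gap is in assertions (2) and (4). These are existential claims --- that some separating hyperplane violates (\ref{eqab}), and that some hyperplane satisfying all three conditions fails to separate --- so an explicit example \emph{is} the proof, and your proposal stops at a search strategy ("place a non-minimal node $v$ so that the pin ideals are unavailable", "exhibit a long-range bad pair") without producing one. The paper settles (2) with a seven-element tree: root with coefficient $0$, one child with coefficient $-2$ over two leaves with coefficient $+1$ each, the other child with coefficient $+2$ over two leaves with coefficient $-1$ each; one then verifies via Lemma \ref{checkcut} that no bad pair exists even though $|{\pm 2}|\neq 1$. For (4) it takes a larger balanced tree, propagates leaf signs by the local rules of part (3), and exhibits a concrete bad pair $I=\langle g,e,f\rangle$, $J=\langle a,b,c,d,e,f\rangle$ whose symmetric difference is connected --- precisely the kind of non-principal, long-range pair that your list of "decisive edges" ($D_{v_i}\subset D_v$ and $\{m\}\subset D_v$) does not see. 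That last point deserves emphasis: restricting to principal-ideal edges is sound for deriving \emph{necessary} conditions, but verifying that a candidate in (2) really is separating, or finding the failure in (4), requires checking pairs of non-principal ideals, and until you write down the trees and do that check, (2) and (4) remain unproved.
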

\begin{proof}
\begin{enumerate}
 \item  We want to show that,
 there exist two minimal elements $i$ and $j$ such that $c_i>0$ and $c_j<0$. The argument in the proof for 
 the disjoint union of chains also works here. The key point is that for any connected poset ideal $I$ in the binary tree 
 and one of its minimal element $i$, $I\backslash\{i\}$ is still connected in $\Pc$.
 \item The argument that all the minimal elements have the same absolute value still holds as in the disjoint union of chains. But 
 it is possible that not all elements have the same absolute value. For example. 
 consider the typerplane as the following labelled represented poset, where the label for an element $i$ in $P$ is the coefficient
 $c_i$ in $\Hc$. We can check that there are no bad pairs for $\Hc$, thus $\Hc$ is a seperating hyperplane. But not all 
 coefficients in $\Hc$ have the same absolute value.
 $$ \xy 0;/r.15pc/: (0,0)*{\circ}="a";
 (10,0)*{\circ}="b";
 (5,10)*{\circ}="c";
 (-5,20)*{\circ}="d";
 (-15,10)*{\circ}="e";
 (-20,0)*{\circ}="f";
  (-10,0)*{\circ}="g";
 (0,-5)*{1};
 (10,-5)*{1};
 (-20,-5)*{-1};
 (-10,-5)*{-1};
 (-5,25)*{0};
  (10,12)*{-2};
 (-20,12)*{2};
 "a"; "c"**\dir{-};
 "b"; "c"**\dir{-};
  "e"; "f"**\dir{-};
 "e"; "g"**\dir{-};
  "d"; "e"**\dir{-};
 "d"; "c"**\dir{-};
\endxy$$

 \item Now assume all coefficients have the same absolute value, and thus can only take value from $\{-1,0,1\}$ after rescaling. So here
 we only need to talk about the sign for an element $i$ in $P$ ($+$ refers to $c_i=1$ and $-$ refers to $c_i=-1$). Now we want to 
 show that the sign of an element is determined by the sign of its two children. Here ``the sign of the child" refers to the sign of
the poset ideal generated by that child. 
 In particular, there are exactly six local sign patterns:
$$ \xy 0;/r.22pc/: (0,0)*{\circ}="a";
 (10,0)*{\circ}="b";
 (5,10)*{\circ}="c";
 (0,-2)*{-};
 (10,-2)*{-};
 (5,12)*{+};
 "a"; "c"**\dir{-};
 "b"; "c"**\dir{-};
\endxy,\,\,\, \xy 0;/r.22pc/: (0,0)*{\circ}="a";
 (10,0)*{\circ}="b";
 (5,10)*{\circ}="c";
 (0,-2)*{+};
 (10,-2)*{+};
 (5,12)*{-};
 "a"; "c"**\dir{-};
 "b"; "c"**\dir{-};
\endxy,\,\,\,\xy 0;/r.22pc/: (0,0)*{\circ}="a";
 (10,0)*{\circ}="b";
 (5,10)*{\circ}="c";
 (0,-2)*{-};
 (10,-2)*{+};
 (5,12)*{0};
 "a"; "c"**\dir{-};
 "b"; "c"**\dir{-};
\endxy,\,\,\,\xy 0;/r.22pc/: (0,0)*{\circ}="a";
 (10,0)*{\circ}="b";
 (5,10)*{\circ}="c";
 (0,-2)*{0};
 (10,-2)*{-};
 (5,12)*{+};
 "a"; "c"**\dir{-};
 "b"; "c"**\dir{-};
\endxy,\,\,\, \xy 0;/r.22pc/: (0,0)*{\circ}="a";
 (10,0)*{\circ}="b";
 (5,10)*{\circ}="c";
 (0,-2)*{0};
 (10,-2)*{+};
 (5,12)*{-};
 "a"; "c"**\dir{-};
 "b"; "c"**\dir{-};
\endxy,\,\,\,\xy 0;/r.22pc/: (0,0)*{\circ}="a";
 (10,0)*{\circ}="b";
 (5,10)*{\circ}="c";
 (0,-2)*{0};
 (10,-2)*{0};
 (5,12)*{0};
 "a"; "c"**\dir{-};
 "b"; "c"**\dir{-};
\endxy.$$
Notice that $0$ appears if and only if its children have a $+$ and a $-$. 
For two elements $a,b$ with a common parent $d$,
\begin{enumerate}
\item suppose $c_b=c_a=1$. Let $e$ be a
minimal element with $c_e=-1$. Then by the pair $(I=\{e\},J=<d,e>)$ ($J$ is the poset ideal generated by $d$ and $e$), 
we have $h(\rho(J))\ge 0$,
and thus 
$c_d=-1$. This corresponds to the second tree above, and the same for the first tree.
\item suppose $c_b=-c_a>0$.  Then by the pair
$(\{b\},\{a,b,d\})$ and $(\{a\},\{a,b,d\})$, we have
$c_d=0$, which corresponds to the third tree above.
\item  suppose $c_b>0$ and $c_a=0$.  This indicates
that $a$ is larger than some minimal element $e$ with $c_e<0$. Then by the pair
$(\{e\},<d>)$ and $(\{b\},<d>)$, we have $h(\rho(<d>))=0$, thus
$c_d=-c_b$, which corresponds to the forth tree above. The fifth and the sixth tree can be obtained in a similar way.
\end{enumerate}
\item Following the above rule will not always result in a separating hyperplane. For example, 
consider the hyperplane represented by the following labelled poset.
 $$ \xy 0;/r.15pc/: (0,0)*{\circ}="a";
 (10,0)*{\circ}="b";
 (5,10)*{\circ}="c";
 (-5,20)*{\circ}="d";
 (-15,10)*{\circ}="e";
 (-20,0)*{\circ}="f";
  (-10,0)*{\circ}="g";
  (-10,20)*{g};
 (0,-5)*{1};
 (10,-5)*{1};
 (-20,-5)*{1};
 (-10,-5)*{1};
 (0,-10)*{a};
 (10,-10)*{b};
 (-20,-10)*{c};
 (-10,-10)*{d};
 (-5,25)*{-1};
  (10,12)*{-1};
 (-20,12)*{-1};
 "a"; "c"**\dir{-};
 "b"; "c"**\dir{-};
  "e"; "f"**\dir{-};
 "e"; "g"**\dir{-};
  "d"; "e"**\dir{-};
 "d"; "c"**\dir{-};
 (40,0)*{\circ}="a1";
 (50,0)*{\circ}="b1";
 (45,10)*{\circ}="c1";
 (35,20)*{\circ}="d1";
 (25,10)*{\circ}="e1";
 (20,0)*{\circ}="f1";
  (30,0)*{\circ}="g1";
 (40,-5)*{-1};
 (50,-5)*{-1};
  (40,-10)*{e};
 (50,-10)*{f};
 (20,-5)*{-1};
 (30,-5)*{-1};
 (35,25)*{1};
  (50,12)*{1};
 (20,12)*{1};
 "a1"; "c1"**\dir{-};
 "b1"; "c1"**\dir{-};
  "e1"; "f1"**\dir{-};
 "e1"; "g1"**\dir{-};
  "d1"; "e1"**\dir{-};
 "d1"; "c1"**\dir{-};
   (15,30)*{\circ}="o";
   "d"; "o"**\dir{-};
 "d1"; "o"**\dir{-};
  (15,35)*{0};
\endxy$$
One can easily check that the above hyperplane follows the six local rules listed above as well the other two conditions in Property
\ref{description}. However, for example, $I=<g,e,f>$ and $J=<a,b,c,d,e,f>$ is a bad pair.
 \end{enumerate}
\end{proof}

\begin{proposition}\label{zigzag}
 For the zigzag posets, Property \ref{description} (\ref{min}) is not necessary for $\Hc$ to be a seperating hyperplane. However,
 for any hyperplane $\Hc$ with Property \ref{description} (\ref{min}), the rest two conditions listed in Property \ref{description} 
 are necessary and sufficient conditions for $\Hc$ to be a seperating hyperplane. 
\end{proposition}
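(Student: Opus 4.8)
The plan is to handle the two assertions separately, using throughout the criterion of Lemma~\ref{checkcut} together with the structural fact that the Hasse diagram of a zigzag is a path, so a subset of $P$ is connected exactly when it is a contiguous segment of that path; in particular, by Lemma~\ref{orderedge}, the edges of $\Oc(P)$ are precisely the pairs $\rho(I),\rho(J)$ with $I\subset J$ and $J\setminus I$ such a segment. For the first assertion I would produce a zigzag carrying a separating hyperplane that fails Property~\ref{description}(\ref{min}). The cleanest witness is the valley fence $P:\,a>b<c$, whose unique minimal element is $b$: by Lemma~\ref{x_a=x_b} the hyperplane $x_a=x_c$ (so $c_a=1$, $c_c=-1$, $c_b=0$) is separating because $a$ and $c$ are incomparable, yet it cannot satisfy (\ref{min}), which asks for two minimal elements of opposite sign. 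Thus for a zigzag the sign variation needed to cut may be carried entirely by maximal elements, so (\ref{min}) is genuinely dispensable.

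For the second assertion I assume $\Hc$ has (\ref{min}) and prove the equivalence. Necessity of (\ref{eqab}) and (\ref{unique}) follows the template of Propositions~\ref{chain} and~\ref{tree}. First, for two minimal elements $u,v$ with $c_u>0>c_v$, the edges $\{u\}\subset\{u,v\}$ and $\{v\}\subset\{u,v\}$ (each difference a single vertex, hence connected) force $c_u+c_v\ge 0$ and $c_u+c_v\le 0$, whence $c_u=-c_v$ and $|c_u|=|c_v|$; chaining each minimal through the sign-opposite minimal guaranteed by (\ref{min}) makes all minimal coefficients share one absolute value, normalized to $1$. Next, for a peak $p$ covering valleys $u,v$ one has $\langle p\rangle=\{u,v,p\}$: if $c_u,c_v$ have opposite signs the edges $\{u\}\subset\langle p\rangle$ and $\{v\}\subset\langle p\rangle$ (differences $\{v,p\}$ and $\{u,p\}$, both connected) pin $c_p=0$; if $c_u=c_v$ these edges only give $c_p\ge -2$, and I would instead use the edge $\{w\}\subset\{w\}\cup\langle p\rangle$, with $w$ a sign-opposite minimal furnished by (\ref{min}), to force $c_p=-c_u\in\{\pm 1\}$. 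In every case $c_p=-\tfrac{c_u+c_v}{2}\in\{-1,0,1\}$, which is exactly (\ref{eqab}) together with the local determination rule of (\ref{unique}).

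For sufficiency I assume (\ref{min}), (\ref{eqab}), (\ref{unique}) and verify the conditions of Lemma~\ref{checkcut}: (\ref{nontrivial}) is immediate from (\ref{min}). For (\ref{nobad}) I would exploit the path: writing the elements in Hasse order as $w_1,\dots,w_n$ and setting $T_k=c_{w_1}+\cdots+c_{w_k}$, the identity $c_p=-\tfrac{c_u+c_v}{2}$ yields by telescoping that consecutive valley partial sums differ by $\tfrac{c_{v_i}-c_{v_{i-1}}}{2}$ and that every $T_k$ lies in a window of two consecutive integers determined by the endpoints. Consequently $\sum_{k\in S}c_k=T_b-T_a\in\{-1,0,1\}$ for every segment $S$. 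Since all coefficients are integers and each $\rho(I)$ is a $0/1$ vector, along any edge $I\subset J$ (with $J\setminus I$ a segment) we get $|h(\rho(J))-h(\rho(I))|\le 1$ with both values integral, so they cannot be of strictly opposite sign; hence no bad pair exists and $\Hc$ is separating, just as in Proposition~\ref{chain}.

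The main obstacle is twofold and concentrated at the ends of the zigzag. In the necessity argument an endpoint peak covers a single valley, so the two local edges confine its coefficient only to the two-element set $\{0,-c_v\}$; pinning a single value requires invoking (\ref{min}) to supply a sign-opposite minimal and, for the bookkeeping to match the count $2^{\#\{\text{minimal}\}}$, imposing the zero-avoiding convention built into (\ref{unique}). Correspondingly, the technical heart of sufficiency is showing that, for every admissible assignment of valley signs and every resolution of the endpoint peaks, the partial sums $T_k$ really do stay inside a single width-one window, so that no long connected segment can accumulate a sum of absolute value $\ge 2$; this uniform bound is what ultimately forbids a sign change across any edge.
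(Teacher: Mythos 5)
Your overall route coincides with the paper's: the valley fence $a>b<c$ carrying a hyperplane supported on its two maximal elements for the first assertion, bad pairs built from principal ideals for necessity, and the fact that every connected segment of the zigzag has coefficient sum in $\{-1,0,1\}$ for sufficiency. (Your sufficiency step is in fact a little cleaner, since by working directly with the edge description of Lemma~\ref{orderedge} you never need the paper's auxiliary reduction that a connected symmetric difference forces $I\subseteq J$ in a zigzag; on the other hand, your ``window of two consecutive integers'' claim for the partial sums $T_k$ is exactly the paper's unproved segment-sum assertion, so it still needs the short induction along the path.)

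There is one concrete gap, in the necessity of Property~\ref{description}~(\ref{eqab}) for an interior peak $p$ covering two valleys of the same sign, say $c_u=c_v=1$, with $w$ the opposite-sign minimal element supplied by (\ref{min}), $c_w=-1$. The pair $\{u\}\subset\langle p\rangle$ gives $2+c_p\ge 0$, i.e.\ $c_p\ge -2$, and the pair $\{w\}\subset\{w\}\cup\langle p\rangle$ that you invoke gives $1+c_p\le 0$, i.e.\ $c_p\le -1$. Together these leave the entire interval $-2\le c_p\le -1$, so they do not ``force $c_p=-c_u$'' as you claim: a single one-sided inequality cannot pin down an equality. To close this you need a matching lower bound, for instance from the edge $\{w,u,v\}\subset\{w,u,v,p\}$ (the difference is the single element $p$, hence connected, and $\{w,u,v\}$ is an ideal because all three elements are minimal): there $h$ jumps from $+1$ to $1+c_p$, forcing $c_p\ge -1$ and hence $c_p=-1$. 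The paper's own treatment of this case (via an ideal $\langle m,n\rangle$ involving a maximal element over two valleys of opposite signs) is also sketchy, but your write-up should at least record this second inequality; without it the value $c_p=-2$, which genuinely produces the bad pair $(\{w,u,v\},\{w,u,v,p\})$, has not been excluded. The endpoint-peak ambiguity you flag is real and is resolved only by the zero-avoidance convention built into (\ref{unique}); the counterexample for the first assertion and the rest of the argument are fine.
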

\begin{proof}The following example is a seperating hyperplane but does not satisfy Property \ref{description} (\ref{min}). 
$$ \xy 0;/r.22pc/: (0,0)*{\circ}="a";
 (10,0)*{\circ}="b";
 (5,-10)*{\circ}="c";
 (0,2)*{-1};
 (10,2)*{-1};
 (7,-10)*{1};
 "a"; "c"**\dir{-};
 "b"; "c"**\dir{-};
\endxy$$

Now assume
$\Hc$ is a hyperplane satisfying Property \ref{description} (\ref{min}). We first prove that if $\Hc$ is a seperating hyperplane, then both 
 Property \ref{description} (\ref{eqab}) and (\ref{unique}) are true.
 \begin{enumerate}
  \item We want to prove that all the nonzero coefficients in any separating hyperplane for a zigzag poset have the same 
  absolute value. First notice that, all the minimal elements have the same absolute value, as proved in Proposition \ref{chain}. Following
  the same proposition, all the non maximal elements (if nonzero) have the same absolute value. As for the maximal elements, let us has
   a closer look at the zigzag poset. One maximal element $m$ covers at most two minimal elements $p,q$. For the case $m$ only covers one
   minimal element, we have the coefficient $c_m$ need to have the same absolute value for the same reason as disjoint chains proved in
   Proposition \ref{chain}. Now there are two cases when $m$ covers two minimal elements $p,q$: 
   \begin{enumerate}
    \item $c_p \cdot c_q<0$. Let $I=<m>$ be the poset ideal generated by $m$. Consider the pair $I$ and $J$, where $J=\{p\}$ or $\{q\}$.
    We have $h(\rho(I))=0$, which implies $|c_m|\le 1$.
    \item $c_p \cdot c_q>0$. Say $c_p=c_q=1$. Let $n$ be a maximal element adjacent to $m$ that covers two 
    minimal elements with different signs. For example,
    $$ \xy 0;/r.15pc/: (0,0)*{\circ}="a";
 (10,0)*{\circ}="b";
 (5,10)*{\circ}="c";
 (0,-5)*{1};
 (10,-5)*{-1};
 (10,12)*{0};
 (5,15)*{n};
 "a"; "c"**\dir{-};
 "b"; "c"**\dir{-};
 (-5,10)*{\circ}="d";
 (-10,20)*{\circ}="e";
 (-15,10)*{\circ}="f";
  (-20,0)*{\circ}="g";
 (-25,-10)*{\circ}="h";
 (-30,-20)*{\circ}="i";
 (-8,5)*{-1};
 (-15,22)*{m};
 (-20,12)*{-1};
  (-25,2)*{1};
 (-30,-8)*{-1};
 (-35,-18)*{1};
 "a"; "d"**\dir{-};
 "d"; "e"**\dir{-};
  "e"; "f"**\dir{-};
 "g"; "f"**\dir{-};
  "g"; "h"**\dir{-};
 "i"; "h"**\dir{-};
\endxy$$ Consider the poset ideal $I=<m,n>$. Similar as the previous case, we have $h(\rho(I))=0$,
    which still implies $|c_m|\le 1$. 
   \end{enumerate}

  \item Since $\Hc$ satisfies conditions (\ref{min}) and (\ref{eqab}) in Property \ref{description}, 
  once we fix the signs of all the minimal elements, all elements except those  maximal
  are uniquely determined the same way as the disjoint chains (Proposition \ref{chain}). As for the the maximal elements, 
  they are uniquely determined
  by the signs of their two children the same as the binary trees (Proposition \ref{tree}).
  \end{enumerate}
   Now we want to prove that any hyperplane $h(x)=0$ satisfying the three conditions listed in Property \ref{description} 
   is a seperating hyperplane. The condition (\ref{min}) in Property \ref{description} implies condition (\ref{nontrivial}) in Lemma
   \ref{checkcut}. Now we want to show that there are no bad pairs. 
  Notice that by
  the rules descripted above, any connected component has value sum to $\{1,0,-1\}$. In the case $I\subset J$, if $h(\rho(I))<0$, then
  $h(\rho(J))=h(\rho(I))+h(\rho{J\backslash I})<1$, since $h(\rho(J\backslash I))<1$. Now we claim that for the zigzag poset, the condition that
  $(I\backslash J)\cup (J\backslash I)$ is connected, implies that $I\backslash J$ or $J\backslash I$ is empty. Consider a generic connected
  subposet $S=(I\backslash J)\cup (J\backslash I)$. We want to show that $S\subset I$ or $S\subset J$. 
  If $S$ only has one maximal element, then it is clear that all the elements belong to the same order ideal as the maximal
  element (either $I$ or $J$). If there are more than one maximal element, see the following example. 
     $$ \xy 0;/r.15pc/: (0,0)*{\circ}="a";
 (10,0)*{\circ}="b";
 (5,10)*{\circ}="c";
 (5,15)*{b};
 "a"; "c"**\dir{-};
 "b"; "c"**\dir{-};
 (-5,10)*{\circ}="d";
 (-10,20)*{\circ}="e";
 (-15,10)*{\circ}="f";
 (0,-5)*{d};
 (-15,22)*{a};
 "a"; "d"**\dir{-};
 "d"; "e"**\dir{-};
  "e"; "f"**\dir{-};
\endxy$$
  Consider two adjacent maximal elements (here they are $a$ and $b$ in 
  the example). These two maximal elements cover a common minimal
  element $d$, because this subposet is connected. Then $d$ belongs to the same poset ideal as both $a$ and $b$. 
  Therefore, both $a$ and $b$ belong to the same poset ideal. This shows that $S$ belongs to either $I$ or $J$.
\end{proof}

%
%
\section{Birkhoff polytopes}
Birkhoff polytopes $B_n$ are defined to be the convex hull of all $n\times n$ nonnegative matrices with row sum and column sum 
equal to one. These matrices are known as the doubly stochastic matrices.
Here we consider an $n\times n$ matrix as a $n^2$-vector. 
Birkhoff polytopes are well-studied polytopes and have many applications, in combinatorial optimization 
and  Bayesian statistics, for example. In this section, we look for seperating hyperplanes for $B_n$ (Theorem \ref{more}).

In the rest of the section, we assume the hyperplanes have the form $h(x)=0$, but actually all the results holds
for general hyperplanes $h(x)=r$ for any constant $r$. We start with the following known properties of Birkhoff polytopes $B_n$.
Here we use both the one line notation and the cycle notation for a permutation. For example, $w=34256187$ is the one line 
notation for the permutation sending $1\rightarrow 3$, $2\rightarrow 4$, $3\rightarrow 2$, $4\rightarrow 5$, $5\rightarrow 6$, 
$6\rightarrow 1$, 
$7\rightarrow 8$ and $8\rightarrow 7$. The cycle notation for $w$ is $(132456)(78)$, thus $w$ has two cycles.
\begin{enumerate}
\item $\dim B_n=(n-1)^2$;
\item $B_n$ has $n!$ vertices, which are all the matrices corresponding to permutations $S_n$;
\item permutations $w$ and $u$ form an edge in $B_n$ if and only if $w^{-1}u$ has one cycle (excluding the fixed points).  [reference?]
\end{enumerate}
In particularly, for $n=3$, $w^{-1}u$ has one cycle for any $w,u\in S_3$. In other words, the skeleton graph for $B_3$ is the complete graph $K_6$.  Therefore, there are no seperating hyperplanes for $B_3$. Moreover, we have
\begin{lemma}\label{basic}$B_4$ has no seperating hyperplanes. 
\end{lemma}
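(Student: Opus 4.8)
The plan is to show that for $B_4$, every pair of permutations $w,u\in S_4$ that does \emph{not} form an edge is nonetheless ``reachable'' in a way that forces any candidate separating hyperplane to fail; more concretely, I would show that the non-edges cannot all be avoided by a single hyperplane because the edge graph of $B_4$ is too highly connected. Recall from the stated properties that $w$ and $u$ are joined by an edge of $B_4$ precisely when $w^{-1}u$ is a single cycle (ignoring fixed points). So the non-edges correspond to those $w^{-1}u$ whose cycle type (on the non-fixed points) consists of \emph{at least two} nontrivial cycles. In $S_4$ the only cycle type with two nontrivial cycles is $2+2$, i.e. products of two disjoint transpositions. Thus $w$ and $u$ fail to be adjacent exactly when $w^{-1}u$ is one of the three elements $(12)(34)$, $(13)(24)$, $(14)(23)$.

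First I would set up the contrapositive: suppose $\Hc\colon h(x)=0$ is a separating hyperplane of $B_4$, so that by the definition of a cut, $h$ does not change sign along any edge, and $h$ takes both strictly positive and strictly negative values on the $24$ vertices. The key reduction is that the vertex set $S_4$ splits into the ``positive'' set $S^+=\{w : h(\rho(w))>0\}$, the ``negative'' set $S^-$, and the ``zero'' set $S^0$, and the separating condition forces: no edge of $B_4$ joins a vertex of $S^+$ to a vertex of $S^-$. Hence every edge between a strictly positive and a strictly negative vertex must be a non-edge, i.e. the two permutations must differ by one of the three double transpositions above. The main step is then a connectivity/counting argument: I would show that the graph on $S_4$ whose edges are the \emph{actual} edges of $B_4$ (the single-cycle differences) is so richly connected that one cannot two-color a nonempty-nonempty part of the vertices with all cross-pairs being double transpositions.

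Concretely, the cleanest route is to fix any $w$ with $h(\rho(w))>0$ and examine its three non-neighbors $w(12)(34)$, $w(13)(24)$, $w(14)(23)$ (here I multiply on the right by the double transpositions); every \emph{other} vertex is an actual neighbor of $w$ and hence must have $h$-value of the same sign (nonnegative if we normalize $w$ positive, since $h$ cannot strictly change sign across an edge). So all $24$ vertices except possibly those three must be $\ge 0$, and in fact $>0$ except on $S^0$. To get a strictly negative vertex at all, one of the three double-transposition partners of $w$ must be negative — say $v=w(12)(34)$ with $h(\rho(v))<0$. Now I repeat the argument from $v$: every vertex except $v$'s three double-transposition partners must be $\le 0$. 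The obstacle, and the crux of the proof, is to check that the two exception-triples (the non-neighbors of $w$ and the non-neighbors of $v$) cannot simultaneously account for all sign disagreements; intersecting the constraints forces a vertex that is required to be both $>0$ and $<0$. I would verify this by computing the three non-neighbors of $v$ and observing they overlap $w$'s non-neighbors in too few points, leaving some vertex adjacent to both a forced-positive and a forced-negative vertex.

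I expect the genuine obstacle to be purely bookkeeping: there are $24$ vertices and one must track the sign constraints through the double-transposition structure, and the symmetry of the situation (the three double transpositions together with the identity form the Klein four-group $V_4\subset S_4$) must be exploited to keep the case analysis short. The elegant formulation is that the non-edges of $B_4$ are exactly the pairs differing by a nontrivial element of a coset of $V_4$, so the ``non-edge graph'' is a disjoint union of $6$ copies of $K_4$ (one per left coset of $V_4$), each a $4$-clique; any valid sign-separation would have to be constant on the complement of such a sparse graph, which is impossible since the actual edge graph is connected and each $S^{\pm}$ would have to be a union of these tiny cliques while still admitting no cross edges — a contradiction with the connectivity of the single-cycle graph. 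I would present this coset/clique picture first, then close with the short contradiction, assuming the stated edge criterion throughout.
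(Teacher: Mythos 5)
Your identification of the non-edges of $B_4$ with the pairs lying in a common coset of the Klein four-group is correct, and your first two steps (every vertex adjacent to a positive $w$ is forced $\ge 0$, so any negative vertex must be one of $w$'s three non-neighbors; then repeating the argument from a negative $v=w(12)(34)$) coincide with the opening of the paper's proof. But the way you propose to close the argument does not work, and the gap is essential, not bookkeeping. The edge constraints only ever yield weak inequalities: a vertex adjacent to a positive vertex is forced to satisfy $h\ge 0$, and a vertex adjacent to both a positive and a negative vertex is forced to satisfy $h=0$ --- no vertex is ever forced to be strictly positive or strictly negative. Indeed, the two exception-triples you compute have union equal to the single coset $\{w,\,w(12)(34),\,w(13)(24),\,w(14)(23)\}$, so your argument forces $h=0$ on the other $20$ vertices and nothing more; the sign pattern ``$h(x_w)>0$, $h(x_{w(12)(34)})<0$, $h=0$ elsewhere'' violates no edge constraint at all, since its unique positive--negative pair is a non-edge. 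Hence no purely graph-theoretic or connectivity argument on the $1$-skeleton can produce the promised vertex ``required to be both $>0$ and $<0$'', and the coset/clique picture in your last paragraph admits exactly such nonconstant colorings.

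What actually kills this residual sign pattern is linear algebra: the permutation matrices satisfy affine relations such as $x_{(12)(34)}+x_{(13)(24)}=x_{(1243)}+x_{(1342)}$ and $x_{(13)(24)}+x_{(14)(23)}=x_{(1324)}+x_{(1423)}$ (each pair occupies the same multiset of matrix positions). Since all single-cycle permutations are pinned to $h=0$ by your step two, the first relation gives $h(x_{(12)(34)})+h(x_{(13)(24)})=0$, hence $h(x_{(13)(24)})>0$, and similarly $h(x_{(14)(23)})>0$; the second relation then yields $0<h(x_{(13)(24)})+h(x_{(14)(23)})=0$, the desired contradiction. This is precisely the route the paper takes. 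To repair your proof you must import these (or equivalent) affine dependencies among the vertices of $B_4$; without them the combinatorial constraints you list are simultaneously satisfiable and the lemma does not follow.
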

\begin{proof}Suppose there exists a seperating hyperplane with coefficients indicated in the following matrix: 
$$\left(
  \begin{array}{cccc}
    a & b & c & d \\
    e & f & g & h \\
    i & j &  k & \ell \\
    m & n & o & p \\
  \end{array}
\right).$$
We use $x_w$ to represent the vector corresponding to the permutation matrix for a permutation $w$. By symmetry, assume $h(x_{\text{id}})>0$. The identity permutation is connected with all other permutations except for three with two cycles $(12)(34)$, $(13)(24)$ and $(14)(23)$. Then for any permutation $w$ that is not the above three, we have $h(x_w)\ge 0$, and the only possible $u$'s with $h(x_u)<0$ are among the above three. Without generality, assume $h(x_{(12)(34)})<0$. Then note that for the permutation $(12)(34)$, it is connected to all other permutations except for $\text{id}$, $(13)(24)$ and $(14)(23)$. Therefore, $h(x_v)=0$ for all permutations $v$ with one cycles.

Now notice that $h(x_{(12)(34)})+h(x_{(13)(24)})=h(x_{2143})+h(x_{3412})=(e+b+o+\ell)+(i+n+c+h)=(e+n+c+\ell)+(i+b+o+h)=h(x_{2413})+h(x_{3142})=h(x_{(1243)})+h(x_{(1342)})=0$,
therefore, $h(x_{(13)(24)})> 0$. Similarly, we can get $h(x_{(14)(23)})> 0$. But then 
$0<h(x_{(13)(24)})+h(x_{(14)(23)})=(i+n+c+h)+(m+j+g+d)=(i+n+g+d)+(m+j+c+h)=h(x_{(1324)})+h(x_{1423})=0$,
a contradiction. Therefore, there does not exist any seperating hyperplane. 
\end{proof}
\begin{remark}Even though Lemma \ref{basic} is a special case of Theorem \ref{more}, we still state it separately as a lemma, since its proof provides a good example for Theorem \ref{more}.
\end{remark}
\begin{theorem}\label{more}$B_n$ has no seperating hyperplanes.
\end{theorem}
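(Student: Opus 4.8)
The plan is to generalize the combinatorial argument already carried out for $B_4$ in Lemma \ref{basic}, using the edge characterization that $w$ and $u$ form an edge of $B_n$ precisely when $w^{-1}u$ is a single cycle (ignoring fixed points). The guiding principle is that any two permutations $w,u$ whose ``difference'' $w^{-1}u$ is a single cycle are joined by an edge, so for a genuine separating hyperplane $\Hc:\,h(x)=0$ with $h$ linear on the permutation vectors, no such edge may cross $\Hc$; that is, $h(x_w)h(x_u)\ge 0$ whenever $w^{-1}u$ is a single cycle. The whole difficulty is therefore to show that this ``no single-cycle edge crosses $\Hc$'' condition, combined with the existence of vertices strictly on both sides, is contradictory.

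First I would set up the sign function $\sigma(w)=\sgn(h(x_w))\in\{+,0,-\}$ on $S_n$ and, after possibly negating $h$, normalize so that $\sigma(\mathrm{id})\ge 0$. The key reduction is that the set $T$ of permutations that are \emph{not} adjacent to a fixed vertex $w$ consists exactly of those $u$ with $w^{-1}u$ having at least two nontrivial cycles; for the identity these are the fixed-point-free-or-partial products of at least two disjoint cycles. The strategy mirrors the $B_4$ proof: assume $h(x_{\mathrm{id}})>0$, deduce that every vertex not among the ``multi-cycle'' permutations has $h\ge 0$, so any strictly negative vertex must be a product of $\ge 2$ disjoint cycles, and then derive linear relations among the $h$-values of such products by exploiting that a product of two cycles can be rewritten by ``recombining'' the cycles into different single-cycle or two-cycle permutations having equal coordinate sums.

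The central device, which is exactly what drives the $B_4$ computation, is a \emph{balancing identity}: given two disjoint cycles $C_1$ and $C_2$, the sum $h(x_{C_1C_2})+h(x_{C_1'C_2'})$ of the $h$-values of two different pairings of the same underlying entries can be rewritten, entrywise in the coefficient matrix, as $h(x_{D_1})+h(x_{D_2})$ where each $D_i$ is a \emph{single} cycle (obtained by splicing $C_1$ and $C_2$ together at a transposition). Since single cycles give edges with $\mathrm{id}$, and hence $h\ge 0$ on them once we know they lie on the nonnegative side, such identities force the multi-cycle vertices to have $h$-values that cannot all be made negative without contradiction. Concretely, I would prove a lemma stating that for any two permutations $u,v$ that differ by swapping how two disjoint cycles are joined, one has an equality of coordinate-sums $h(x_u)+h(x_v)=h(x_{u'})+h(x_{v'})$ with $u',v'$ having strictly fewer cycles, then induct on the number of cycles to push all negativity into configurations already handled (the base case being the three double transpositions of $B_4$).

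The main obstacle I anticipate is bookkeeping the cycle-splicing identities in full generality for arbitrary $n$ and arbitrary cycle type, rather than the single special case $(12)(34),(13)(24),(14)(23)$ used for $B_4$; one must verify that the entrywise rearrangement of matrix coefficients really does preserve the total $h$-value and that the resulting permutations have fewer nontrivial cycles, so that an induction on the number of cycles terminates. A secondary subtlety is handling permutations with fixed points uniformly with those without: since fixed points do not count toward the cycle condition, I would first restrict attention to the support of $w^{-1}u$ and argue that fixed points contribute identically to both sides of every balancing identity, so they may be ignored. Once these identities are established, the contradiction follows exactly as in Lemma \ref{basic}: summing two identities that force certain double-cycle values to be positive collides with a third identity forcing their sum to vanish, so no vertex can have $h<0$, contradicting the assumption that $\Hc$ separates $B_n$.
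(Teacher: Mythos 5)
Your proposal follows essentially the same route as the paper: normalize so $h(x_{\mathrm{id}})>0$, take a negative vertex $v$ with the minimal number $k\ge 2$ of cycles, observe that every permutation adjacent to $v$ with fewer cycles must have $h=0$, and derive a contradiction via entrywise-rearrangement (``balancing'') identities among coefficient sums obtained by re-pairing two disjoint cycles. The one place your sketch under-describes the execution is the endgame: for general $n$ one does not reduce to the three double transpositions of $B_4$; the paper instead applies the splicing identity once to produce a companion $\tau_1$ of $v$ with $h(x_{\tau_1})>0$, and then a second identity of a different shape, $h(x_{\tau_1})+h(x_{\sigma_1})=h(x_{\sigma_2})+h(x_{\sigma_3})$, where $\sigma_1$ omits both re-paired cycles and $\sigma_2,\sigma_3$ each carry one of them --- still the same multiset-of-entries principle you invoke, but not literally an instance of your single ``splice at a transposition'' lemma.
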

\begin{proof}Assume there is a hyperplane $h(x)=0$. 
By symmetry, assume $h(x_{\text{id}})>0$. Since all permutations with one cycle are connected with $\text{id}$, we have $h(x_{u})\ge 0$ for all $u$ with one cycle. Suppose $h(x_{v})<0$ for some permutation $v$ with $k$ cycles. Assume $k$ is the smallest such number. In other words, $h(x_{w})\ge 0$, for all $w$ with fewer than $k$ cycles. Notice that $k>1$. First notice that $h(x_{\sigma})=0$, for all $\sigma$ connected with $v$, and have fewer cycles than $v$. In fact, since $\sigma$ has fewer than $k$ cycles, we have $h(x_{\sigma})\ge 0$. On the other hand, since $\sigma$ is connected with $v$, $h(x_{\sigma})> 0$ can not happen. Therefore, $h(x_{\sigma})=0$.

Now we apply the method in Lemma \ref{basic} to show that $h(x_{v})<0$ can not happen. Write in cycle notation $v=(C_1)(C_2)(C_3)\cdots (C_{k})$, where each $C_i$ is some sequence of numbers. Without lose of generality, assume $C_1=125A$ and $C_2=346B$, where $A$ and $B$ are sequences of numbers. First consider the permutation $\tau_1=(325A)(146B)C_3\cdots C_{k}$.  Notice that $$h(x_{v})+h(x_{\tau_1})=h(x_{\tau_2})+h(x_{\tau_3}),$$
where $\tau_2=(125A346B)C_3\cdots C_{k}$ and $\tau_3=(325A146B)C_3\cdots C_{k}$. One can check that $\tau_2$ and $\tau_3$ are both connected with $v$, in fact $\tau_2$ differs with $v$ by $(13)$ and $\tau_3$ differs with $v$ by $(24)$. Since $\tau_2$ and $\tau_3$ also have fewer than $k$ cycles, we proved earlier that $h(x_{\tau_2})=0$ and $h(x_{\tau_3})=0$. Therefore, $h(x_{\tau_1})>0$.

Now consider the permutation $\sigma_1=(C_3)\cdots (C_{k})$. Since it has fewer than $k$ cycles, we have $h(x_{\sigma_1})\ge0$.  Notice that
$$h(x_{\tau_1})+h(x_{\sigma_1})=h(x_{\sigma_2})+h(x_{\sigma_3}),$$
where $\sigma_2=(325A)C_3\cdots C_{k}$ and $\sigma_3=(146B)C_3\cdots C_{k}$. One can check that $\sigma_2$ and $\sigma_3$ are both connected with $v$. Since $\sigma_2$ and $\sigma_3$ both have fewer cycles than $v$, we have $h(x_{(\sigma_2)})=0$ and $h(x_{\sigma_3})=0$. This is a contradiction, since $h(x_{\tau_1})>0$ and $h(x_{\sigma_1})\ge 0$.
\end{proof}
%
%

%
%

\end{document}